\newcommand{\Hom}{\mathop{\mathrm{Hom}}\nolimits}
\newcommand{\Coker}{\mathop{\mathrm{Coker}}\nolimits}
\newcommand{\Spec}{\mathop{\mathrm{Spec}}\nolimits}
\newcommand{\Mod}{\mathop{\mathcal{M}od_k}\nolimits}
\newcommand{\ModA}{\mathop{\mathcal{M}od_k^{\mathbb{A}^1}}\nolimits}
\newcommand{\Ab}{\mathop{\mathcal{A}b_k}\nolimits}
\newcommand{\AbA}{\mathop{\mathcal{A}b_k^{\mathbb{A}^1}}\nolimits}
\newcommand{\Smk}{\mathop{\mathcal{S}m_k}\nolimits}
\newcommand{\Spck}{\mathop{\mathcal{S}pc_k}\nolimits}
\newcommand{\aone}{\mathop{\mathbb{A}^1}\nolimits}
\newtheorem{thm}{Theorem}[section]
\newtheorem{lem}[thm]{Lemma}
\newtheorem{prop}[thm]{Proposition}
\newtheorem{cor}[thm]{Corollary}
\theoremstyle{definition}
\newtheorem{defi}[thm]{Definition}
\newtheorem{rem}[thm]{Remark}
\newtheorem*{nota}{Notation}
\newtheorem*{acknow}{Acknowledgments}
\begin{document}

\title{Relative $\aone$-homology and its applications}
\author{Yuri Shimizu \footnote{Department of Mathematics, Tokyo Institute of Technology \newline E-mail: qingshuiyouli27@gmail.com}}
\maketitle

\begin{abstract}
In this paper, we prove an $\aone$-homology version of the Whitehead theorem with dimension bound. We also prove an excision theorem for $\aone$-homology, Suslin homology and $\aone$-homotopy sheaves. In order to prove these results, we develop a general theory of relative $\aone$-homology and $\aone$-homotopy sheaves. As an application, we compute the relative  $\aone$-homology of a hyperplane embedding $\mathbb{P}^{n-1} \hookrightarrow \mathbb{P}^{n}$.
\end{abstract}

\section{Introduction}

In this paper, we study a relative version of the $\aone$-homology sheaves of smooth schemes over a field and give applications to motives and $\aone$-homotopy theory. Let $k$ be a field. In \cite{Vo00}, Voevodsky constructed a triangulated category of motives $\mathbf{DM}_{-}^{eff}(k)$ over $k$ as a triangulated subcategory of the derived category of Nisnevich sheaves with transfers, with a functor $M$ from the category of smooth $k$-schemes $\Smk$ to $\mathbf{DM}_{-}^{eff}(k)$. For $X \in \Smk$, $M(X)$ is called the motive of $X$. Its homology sheaves with transfers $\mathbf{H}^S_*(X) = H_*(M(X))$ are called the Suslin homology sheaves (\textit{cf}. \cite[Section 2]{As}), whose sections over $\Spec k$ give the Suslin homology group introduced by Suslin-Voevodsky \cite{SV} when $k$ is perfect.

In \cite{MV}, Morel-Voevodsky established the $\aone$-homotopy theory and defined an $\aone$-version of homotopy groups, called $\aone$-homotopy sheaves, as Nisnevich sheaves on $\Smk$. Morel \cite{Mo2} introduced an $\aone$-version of the singular homology, called $\aone$-homology sheaves, as an analogue of Suslin homology by instead using Nisnevich sheaves without transfers. As with motives, there is a functor $C^{\aone}$ from $\Smk$ to a triangulated subcategory of the derived category of Nisnevich sheaves on $\Smk$. The $\aone$-homology sheaves $\mathbf{H}^{\aone}_*(X)$ of $X \in \Smk$ are defined as the homology sheaves $H_*(C^{\aone}(X))$. 

The purpose of this paper is threefold. Firstly, we prove an $\aone$-homological Whitehead theorem with dimension bound and the excision theorem. Secondly, as a tool for proving them, we develop a general theory of \textit{relative} $\aone$-homology, namely $\aone$-homology sheaves for morphisms $f : X \to Y$. Thirdly, as an example, we compute the relative $\aone$-homology of a hyperplane embedding $\mathbb{P}^{n-1} \hookrightarrow \mathbb{P}^{n}$.

Our $\aone$-Whitehead theorem detects whether a morphism $X \to Y$ is an $\aone$-weak equivalence in terms of the $\aone$-fundamental groups and the $\aone$-homology up to degree ${\max\{ \dim X + 1, \dim Y\}}$. See \cite[Thm. 6.4.15]{Ar} for the classical homological Whitehead theorem in topology.

\begin{thm}[see Theorem \ref{dim cor A1}]\label{intro dim cor A1}
Assume $k$ perfect. Let $f : (X,*) \to (Y,*)$ be a morphism of $\aone$-simply connected pointed smooth $k$-schemes and let $d = {\max\{ \dim X + 1, \dim Y\}}$. If $f$ induces an isomorphism $\mathbf{H}^{\aone}_i(X) \xrightarrow{\cong} \mathbf{H}^{\aone}_i(Y)$ for all $2 \leq i < d$ and an epimorphism $\mathbf{H}^{\aone}_d(X) \twoheadrightarrow \mathbf{H}^{\aone}_d(Y)$, then $f$ is an $\aone$-weak equivalence.
\end{thm}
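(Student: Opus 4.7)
The plan is to reduce to an $\aone$-homotopy-sheaf version of the Whitehead theorem with dimension bound, bridged by a relative $\aone$-Hurewicz theorem; both should be available from the general theory of relative $\aone$-homology and relative $\aone$-homotopy sheaves developed earlier in the paper.

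First, I would apply the long exact sequence of relative $\aone$-homology sheaves for $f$. The pointed $\aone$-simple connectivity of $X$ and $Y$ yields $\mathbf{H}^{\aone}_1(X) = \mathbf{H}^{\aone}_1(Y) = 0$ (via Morel's absolute Hurewicz theorem) and an isomorphism $\mathbf{H}^{\aone}_0(X) \cong \mathbf{H}^{\aone}_0(Y)$; combined with the hypothesis, a diagram chase in the long exact sequence gives $\mathbf{H}^{\aone}_i(f) = 0$ for $0 \leq i \leq d$.

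Next, I would apply the relative $\aone$-Hurewicz theorem inductively. The long exact sequence of relative $\aone$-homotopy sheaves, together with $\pi^{\aone}_1(X) = \pi^{\aone}_1(Y) = 0$, forces $\pi^{\aone}_1(f) = 0$, so the pair $f$ is relatively $\aone$-simply connected. For each $n$ in the range $2 \leq n \leq d$, once $\pi^{\aone}_i(f) = 0$ for all $i < n$, relative Hurewicz identifies $\pi^{\aone}_n(f) \cong \mathbf{H}^{\aone}_n(f) = 0$; inductively one obtains $\pi^{\aone}_i(f) = 0$ for $1 \leq i \leq d$. The long exact sequence of relative $\aone$-homotopy sheaves then yields $\pi^{\aone}_i(X) \xrightarrow{\cong} \pi^{\aone}_i(Y)$ for $i \leq d - 1$ and a surjection $\pi^{\aone}_d(X) \twoheadrightarrow \pi^{\aone}_d(Y)$. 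Applying the $\aone$-homotopy-sheaf version of the Whitehead theorem with dimension bound (the homotopy analogue of the present statement, whose proof should use dimension-theoretic properties of $\aone$-homotopy sheaves directly) then concludes that $f$ is an $\aone$-weak equivalence.

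The main obstacle is the inductive application of the relative $\aone$-Hurewicz theorem: at each stage, the relevant relative connectivity hypotheses must be verified in the sheaf-theoretic $\aone$-setting, and the Hurewicz comparison map must be shown compatible with the boundary maps of both long exact sequences. The closing appeal to the homotopy-sheaf Whitehead theorem with dimension bound, in turn, rests on precise dimension-theoretic input for smooth schemes, which sits at the heart of this paper's techniques.
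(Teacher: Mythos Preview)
Your first two steps match the paper: one shows $\mathbf{H}^{\aone}_i(f;\mathbb{Z})=0$ for $i\leq d$ and then, via the relative $\aone$-Hurewicz theorem (Corollary~\ref{A1-Whitehead theorem}), obtains $\pi^{\aone}_i(f)=0$ for $i\leq d$. The gap is in your final step. The ``$\aone$-homotopy-sheaf Whitehead theorem with dimension bound'' you invoke is not proved earlier in the paper; it is Corollary~\ref{cor dim cor A1}, which is deduced \emph{from} Theorem~\ref{dim cor A1}. So as written your argument is circular. Moreover, there is no evident direct proof of that corollary via ``dimension-theoretic properties of $\aone$-homotopy sheaves'': higher $\aone$-homotopy sheaves of a smooth scheme do not vanish above its dimension (already $\mathbb{P}^1$ has nontrivial $\pi^{\aone}_n$ for all $n\geq 1$), so there is no homotopy-side analogue of the cohomological dimension bound.

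The missing idea is that the dimension bound enters on the \emph{homology} side. Proposition~\ref{dim thm A1 and Suslin} shows, using $H^{m+1}_{Nis}(f;A)\cong \Hom_{\ModA}(\mathbf{H}^{\aone}_{m+1}(f),A)$ together with Nisnevich cohomological dimension and Yoneda, that $\mathbf{H}^{\aone}_i(f)=0$ for $i\leq d$ forces $\mathbf{H}^{\aone}_i(f)=0$ for \emph{all} $i$. Only after this bootstrap does one apply the relative Hurewicz argument (Corollary~\ref{A1-Whitehead theorem}) to get $\pi^{\aone}_i(f)=0$ for every $i$, and then conclude by the ordinary $\aone$-Whitehead theorem of Morel--Voevodsky. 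In short: push the vanishing to all degrees on the homology side first, then translate to homotopy; do not try to run the dimension argument directly on homotopy sheaves.
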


The Whitehead theorem for $\aone$-\textit{homotopy} sheaves is established by Morel-Voevodsky \cite{MV}, and the novelty here is the detection by $\aone$-\textit{homology} sheaves and the degree bound $d = {\max\{ \dim X + 1, \dim Y\}}$. Next, our excision theorem for $\aone$-homology sheaves is stated as follows.

\begin{thm}[see Theorem \ref{thm. A1-excision}]\label{intro thm ex}
Let $X$ be a smooth $k$-scheme and $U$ a Zariski open set of $X$ whose complement has codimension $r$. Then the morphisms
\begin{align*}
&\mathbf{H}^{\aone}_i(U) \to \mathbf{H}^{\aone}_i(X)& &\mbox{and}& &\mathbf{H}^S_i(U) \to \mathbf{H}^S_i(X)&
\end{align*}
are isomorphisms for every $i < r - 1$ and epimorphisms for $i = r - 1$.
\end{thm}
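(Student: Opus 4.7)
The plan is to reduce the statement to a vanishing theorem for relative $\aone$-homology (respectively Suslin homology) and then to establish that vanishing via Morel--Voevodsky purity together with a dévissage on $Z := X \setminus U$.

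By the general theory of relative $\aone$-homology developed earlier in the paper, the inclusion $U \hookrightarrow X$ gives rise to a long exact sequence of Nisnevich sheaves
\[
\cdots \to \mathbf{H}^{\aone}_{i+1}(X,U) \to \mathbf{H}^{\aone}_i(U) \to \mathbf{H}^{\aone}_i(X) \to \mathbf{H}^{\aone}_i(X,U) \to \mathbf{H}^{\aone}_{i-1}(U) \to \cdots,
\]
and similarly for Suslin homology. It therefore suffices to prove
\[
\mathbf{H}^{\aone}_{j}(X,U) = 0 = \mathbf{H}^{S}_{j}(X,U) \qquad \text{for every } j < r,
\]
since this yields the isomorphism for $i < r-1$ (which demands the vanishing at $j = i$ and $j = i+1 \leq r-1$) and the epimorphism for $i = r - 1$ (which demands only $j = r-1$).

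For the vanishing, I would first dispatch the smooth-pair case: if $Z$ equipped with its reduced structure is smooth of pure codimension $r$ in $X$, then Morel--Voevodsky purity provides an $\aone$-equivalence $X/U \simeq \mathrm{Th}(N_{Z/X})$ in $\Spck$, and Voevodsky's Gysin triangle identifies the cofibre of $M(U) \to M(X)$ with $M(Z)(r)[2r]$ in $\mathbf{DM}_{-}^{eff}(k)$. The Thom space of a rank-$r$ vector bundle over a smooth base is $\aone$-$(r-1)$-connected, and $M(Z)(r)[2r]$ has Suslin homology concentrated in degrees $\geq 2r$, so both vanishing statements follow in this case. For the general case, argue by Noetherian induction on $\dim Z$. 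Choose a proper closed subset $Z' \subsetneq Z$, closed in $X$, with $Z \setminus Z'$ smooth over $k$; over a perfect field one takes $Z' := Z_{\mathrm{sing}}$, and the non-perfect case is handled by invariance of $\aone$-homology under purely inseparable base change. Setting $U' := X \setminus Z'$, one has that $U' \setminus U$ is smooth of pure codimension $r$ in $U'$ while $Z' = X \setminus U'$ has codimension strictly greater than $r$ in $X$. The smooth case gives $\mathbf{H}^{\aone}_j(U', U) = 0$ for $j < r$, the induction hypothesis gives $\mathbf{H}^{\aone}_j(X, U') = 0$ for $j < r + 1$, and the long exact sequence of the triple $(X, U', U)$, obtained from the octahedral axiom applied to $C^{\aone}(U) \to C^{\aone}(U') \to C^{\aone}(X)$, forces $\mathbf{H}^{\aone}_j(X, U) = 0$ for $j < r$. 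The Suslin argument is word-for-word identical.

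The main obstacle lies in the smooth-pair base case: one must set up the purity identification and the connectivity of Thom objects in the categorical framework used to define the relative $\aone$-chain complex, matching the conventions of the relative theory developed in the paper. Once that smooth-pair statement is in hand, the dévissage along the stratification of $Z$ is essentially formal from the octahedral axiom, and the Suslin parallel runs through Voevodsky's Gysin triangle without additional ingredients.
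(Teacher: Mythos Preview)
Your reduction to the vanishing of $\mathbf{H}^{\aone}_j(X,U)$ and $\mathbf{H}^S_j(X,U)$ for $j<r$ matches the paper, but from there the arguments diverge. The paper does not invoke purity, Thom spaces, or any stratification of $Z$. Instead it runs an induction on $i$: once $\mathbf{H}^{\aone}_j(X,U;R)=0$ for $j<i$, the isomorphism \eqref{eqalpha} identifies $\Hom_{\ModA(R)}(\mathbf{H}^{\aone}_i(X,U;R),A)$ with the relative Nisnevich cohomology $H^i_{Nis}(X,U;A)$ for every $A\in\ModA(R)$, and the latter vanishes for $i<r$ by Morel's support-codimension lemma \cite[Lem.~6.4.4]{Mo1}. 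Yoneda in $\ModA(R)$ then forces $\mathbf{H}^{\aone}_i(X,U;R)=0$, and the Suslin statement follows formally from Proposition~\ref{Comp. A1 and Sulin}. This is a two-line argument and uses nothing about the geometry of $Z$ beyond its codimension.

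Your geometric route is sound over a perfect field (with minor care about equidimensionality and about passing from $\aone$-connectivity of $\mathrm{Th}(N_{Z/X})$ to vanishing of its reduced $\aone$-homology), but the non-perfect case is a genuine gap. Both Morel--Voevodsky purity and Voevodsky's Gysin triangle require the closed subscheme to be \emph{smooth over $k$}, not merely regular, and over an imperfect field a reduced closed subscheme of $X$ need not contain any dense open that is smooth over $k$; the d\'evissage can therefore fail to start. The claimed fix via invariance of $\aone$-homology under purely inseparable base change is not available in the paper's framework and is itself a nontrivial statement that you would have to prove. The cohomological argument avoids all of this: Morel's lemma holds over an arbitrary base field, so the paper never needs $Z$ to be smooth, stratified, or anything else.
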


Asok \cite[Prop. 3.8]{As} proved a similar result in degree $0$. We also obtain the $\aone$-homotopy version of the excision theorem.

\begin{cor}[see Corollary \ref{A1 excision of homotopy}]\label{intro A1 excision of homotopy}
Assume $k$ perfect. Let $(X,*)$ be a pointed smooth $k$-scheme and $(U,*)$ a pointed Zariski open set of $(X,*)$ whose complement has codimension $r$. If $(X,*)$ and $(U,*)$ are $\aone$-simply connected, then the morphism
\begin{equation*}
\pi^{\aone}_i(U,*) \to \pi^{\aone}_i(X,*)
\end{equation*}
is an isomorphism for every $i < r - 1$ and an epimorphism for $i = r - 1$.
\end{cor}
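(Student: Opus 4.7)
The plan is to derive the corollary from the homology excision Theorem \ref{intro thm ex} by passing through relative $\aone$-homology and homotopy, using the relative $\aone$-Hurewicz comparison that the paper develops as part of its general framework.

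First I would form the pair $(X,U)$ and use the long exact sequence of relative $\aone$-homology sheaves. Theorem \ref{intro thm ex} says $\mathbf{H}^{\aone}_i(U)\to\mathbf{H}^{\aone}_i(X)$ is an isomorphism for $i<r-1$ and an epimorphism for $i=r-1$. A direct diagram chase in
\[
\mathbf{H}^{\aone}_i(U)\to\mathbf{H}^{\aone}_i(X)\to\mathbf{H}^{\aone}_i(X,U)\to\mathbf{H}^{\aone}_{i-1}(U)\to\mathbf{H}^{\aone}_{i-1}(X)
\]
then forces $\mathbf{H}^{\aone}_i(X,U)=0$ for all $i\leq r-1$.

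Next I would translate this vanishing into the homotopy world. Both $(X,*)$ and $(U,*)$ are $\aone$-simply connected, so the relative pair $(X,U,*)$ should behave like a simply connected pair in the motivic setting. I would invoke the relative $\aone$-Hurewicz theorem provided by the paper's general framework (the motivic analogue of the classical relative Hurewicz theorem, in the spirit of Morel's absolute $\aone$-Hurewicz theorem from \cite{Mo2}). The induction starts with $\pi^{\aone}_1(X,U,*)=0$ coming from $\aone$-simple connectivity of $X$ and $U$; then assuming $\pi^{\aone}_j(X,U,*)=0$ for $j\leq i-1$ with $i\leq r-1$, the relative Hurewicz map identifies $\pi^{\aone}_i(X,U,*)$ with $\mathbf{H}^{\aone}_i(X,U)=0$. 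By induction up to $i=r-1$, one obtains $\pi^{\aone}_i(X,U,*)=0$ for $i\leq r-1$.

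Finally, the long exact sequence of $\aone$-homotopy sheaves of the pair reads
\[
\pi^{\aone}_{i+1}(X,U,*)\to\pi^{\aone}_i(U,*)\to\pi^{\aone}_i(X,*)\to\pi^{\aone}_i(X,U,*).
\]
For $i<r-1$ both outer terms vanish, so $\pi^{\aone}_i(U,*)\xrightarrow{\cong}\pi^{\aone}_i(X,*)$; for $i=r-1$ only the right-hand term is known to vanish, yielding the epimorphism. Throughout, the perfect-field hypothesis is used implicitly to guarantee the strong $\aone$-invariance of the relevant sheaves and thereby the applicability of Morel's machinery.

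The main obstacle I expect is the relative $\aone$-Hurewicz step: making sure the relative $\aone$-homotopy sheaves are actually defined, that the pair under consideration is suitably pointed and cofibrant, and that the Hurewicz comparison holds for pairs in the generality needed here. All of this should be part of the ``general theory of relative $\aone$-homology and $\aone$-homotopy sheaves'' advertised in the introduction; once those foundations are in place, the argument is a purely formal consequence of Theorem \ref{intro thm ex} and the two long exact sequences.
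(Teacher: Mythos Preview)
Your proposal is correct and follows essentially the same route as the paper. The paper's proof of Corollary~\ref{A1 excision of homotopy} first observes (inside the proof of Theorem~\ref{thm. A1-excision}) that $\mathbf{H}^{\aone}_i(X,U;\mathbb{Z})=0$ for $i<r$, and then invokes Corollary~\ref{A1-Whitehead theorem}, whose proof is exactly the Hurewicz induction you spell out; so the only difference is that the paper has already packaged your inductive step as a standalone corollary.
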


This is a variation of the $\aone$-excision theorem of Asok-Doran \cite{AD} which assumes that $\pi^{\aone}_i(X,*) = 0$ for all $i < r - 1$ and $k$ infinite. In order to prove these results, we develop a general theory of relative $\aone$-homotopy and $\aone$-homology sheaves. If $f : X \to Y$ is a morphism of smooth $k$-schemes, we define its $\aone$-homotopy $\pi^{\aone}_i(f)$, $\aone$-homology $\mathbf{H}^{\aone}_i(f)$ and Suslin homology $\mathbf{H}^{S}_i(f)$.

Finally, as an application of the results above, we compute the relative $\aone$-homology sheaves of the pair $(\mathbb{P}^n,\mathbb{P}^{n-1})$. Let $\underline{\mathbf{K}}^{MW}_n$ be the unramified Milnor-Witt $K$-theory defined by Morel \cite{Mo2}.

\begin{thm}[see Theorem \ref{(Pn,Pn-1)}]\label{intro thm P}
Assume $k$ perfect. For $0 \leq i \leq n$, $n > 0$, we have
\begin{equation*}
\mathbf{H}^{\aone}_i(\mathbb{P}^n,\mathbb{P}^{n-1}) \cong 
\begin{cases}
\underline{\mathbf{K}}^{MW}_n &(i = n) \\
0 &(i < n).
\end{cases}
\end{equation*}
In particular, when $i < n$, we have $\mathbf{H}^{\aone}_i(\mathbb{P}^n) \cong \mathbf{H}^{\aone}_i(\mathbb{P}^{i+1})$.
\end{thm}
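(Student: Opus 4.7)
The plan is to identify the cofiber $\mathbb{P}^n/\mathbb{P}^{n-1}$, up to $\aone$-weak equivalence, with the motivic sphere $\Sigma^n_s \mathbb{G}_m^{\wedge n}$, and then read off its $\aone$-homology from Morel's computation. The long exact sequence of relative $\aone$-homology attached to the closed immersion $\mathbb{P}^{n-1} \hookrightarrow \mathbb{P}^n$, coming from the general framework developed earlier in the paper, identifies $\mathbf{H}^{\aone}_i(\mathbb{P}^n,\mathbb{P}^{n-1})$ with the reduced $\aone$-homology of this cofiber.

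To identify the cofiber I would proceed in three steps. First, fix the point $0=[0\!:\!\cdots\!:\!0\!:\!1]\in\mathbb{P}^n$; linear projection away from $0$ realises $\mathbb{P}^n\setminus\{0\}$ as the total space of the line bundle $\mathcal{O}_{\mathbb{P}^{n-1}}(1)$, with the linear $\mathbb{P}^{n-1}$ as its zero section, so the inclusion $\mathbb{P}^{n-1}\hookrightarrow\mathbb{P}^n\setminus\{0\}$ is an $\aone$-weak equivalence and induces a corresponding equivalence of cofibers
\[
\mathbb{P}^n/\mathbb{P}^{n-1}\xrightarrow{\simeq}\mathbb{P}^n/(\mathbb{P}^n\setminus\{0\}).
\]
Second, the Zariski cover of $\mathbb{P}^n$ by the two opens $\mathbb{A}^n$ (containing $0$) and $\mathbb{P}^n\setminus\{0\}$ is a Nisnevich distinguished square, so Nisnevich excision yields $\mathbb{P}^n/(\mathbb{P}^n\setminus\{0\})\simeq_{\aone}\mathbb{A}^n/(\mathbb{A}^n\setminus\{0\})$. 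Third, since $\mathbb{A}^n$ is $\aone$-contractible the latter is the simplicial suspension $\Sigma^1_s(\mathbb{A}^n\setminus\{0\})$, and the standard identification $\mathbb{A}^n\setminus\{0\}\simeq_{\aone}\Sigma^{n-1}_s\mathbb{G}_m^{\wedge n}$ gives
\[
\mathbb{P}^n/\mathbb{P}^{n-1}\simeq_{\aone}\Sigma^n_s\mathbb{G}_m^{\wedge n}.
\]

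Morel's computation $\tilde{\mathbf{H}}^{\aone}_0(\mathbb{G}_m^{\wedge n})=\underline{\mathbf{K}}^{MW}_n$ with vanishing in higher reduced degrees, combined with the shift $\tilde{\mathbf{H}}^{\aone}_i(\Sigma^n_s X)=\tilde{\mathbf{H}}^{\aone}_{i-n}(X)$ under simplicial suspension, then gives the announced values of $\mathbf{H}^{\aone}_i(\mathbb{P}^n,\mathbb{P}^{n-1})$. The ``in particular'' statement follows by iterating the long exact sequence: the vanishing just proved forces $\mathbf{H}^{\aone}_i(\mathbb{P}^{m-1})\xrightarrow{\cong}\mathbf{H}^{\aone}_i(\mathbb{P}^m)$ whenever $i<m-1$, and chaining these from $m=n$ down to $m=i+2$ produces $\mathbf{H}^{\aone}_i(\mathbb{P}^n)\cong\mathbf{H}^{\aone}_i(\mathbb{P}^{i+1})$. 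The main obstacle is the very first reduction: one has to check that the paper's formal definition of the relative $\aone$-homology of the morphism $\mathbb{P}^{n-1}\hookrightarrow\mathbb{P}^n$ really does compute the reduced $\aone$-homology of the cofiber, and that the two ``$\aone$-equivalence of inclusions $\Rightarrow$ $\aone$-equivalence of cofibers'' invocations above go through for the relevant maps at the level of $\aone$-chain complexes; once this model-categorical bookkeeping is in place, the remainder is a chain of standard Morel--Voevodsky identifications.
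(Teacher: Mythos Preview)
Your argument is correct and rests on the same geometric inputs as the paper's proof: the line-bundle projection $\mathbb{P}^n\setminus\{0\}\to\mathbb{P}^{n-1}$, Zariski/Nisnevich excision for the cover $\{\mathbb{A}^n,\mathbb{P}^n\setminus\{0\}\}$, and Morel's computation of $\mathbb{A}^n\setminus\{0\}$. The packaging, however, differs. You work entirely at the level of pointed spaces, identifying the cofiber $\mathbb{P}^n/\mathbb{P}^{n-1}\simeq_{\aone}\Sigma_s^n\mathbb{G}_m^{\wedge n}$ once and for all, so that the vanishing for $i<n$ (negative-degree homology of the sphere) and the value at $i=n$ fall out simultaneously. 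The paper instead splits the two regimes: for $i<n$ it applies its own excision theorem (Theorem~\ref{thm. A1-excision}) to the inclusion $\mathbb{P}^n\setminus\{0\}\hookrightarrow\mathbb{P}^n$, and for $i=n$ it builds a chain-level distinguished triangle (Proposition~\ref{dist tr of A1 of proj sp}, obtained via the Zariski-excision Lemma~\ref{A1 homology excision thm}) and compares it with the defining triangle of $C^{\aone}(\mathbb{P}^n,\mathbb{P}^{n-1})$ by the five lemma, landing on $\mathbf{H}^{\aone}_{n-1}(\mathbb{A}^n\setminus\{0\})$ before invoking Morel's Hurewicz theorem. Your route is a bit more streamlined and independent of Theorem~\ref{thm. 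A1-excision}; the paper's route has the virtue of exhibiting Theorem~\ref{(Pn,Pn-1)} as an application of that excision result. Two small remarks: the ``bookkeeping'' you flag (relative $\aone$-homology $=$ reduced $\aone$-homology of the cofiber, and stability under replacing an inclusion by an $\aone$-equivalent one) is exactly what the paper carries out at the chain level in Lemma~\ref{A1 homology excision thm} and Proposition~\ref{dist tr of A1 of proj sp}; and your aside that $\widetilde{\mathbf{H}}^{\aone}_i(\mathbb{G}_m^{\wedge n})$ vanishes for $i>0$ is not needed here, since only degrees $i-n\le 0$ enter the range $0\le i\le n$ of the statement.
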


Similar stabilization $\mathbf{H}^{S}_i(\mathbb{P}^n) \cong \mathbf{H}^{S}_i(\mathbb{P}^{i+1})$ in $i < n$ also holds for the Suslin homology (Corollary \ref{cor (Pn,Pn-1) sus}). The $\aone$-homology $\mathbf{H}^{\aone}_i(\mathbb{P}^{i+1})$ can be described in terms of $\underline{\mathbf{K}}^{MW}_i$ (Corollary \ref{cor (Pn,Pn-1)}).

This paper is organized as follows. In Section \ref{Relative Hurewicz Whitehead}, we prove a weak version of the relative $\aone$-Hurewicz theorem. In Section \ref{hom eq ex}, we prove Theorems \ref{intro dim cor A1}-\ref{intro thm ex}. In Section \ref{sec. P1}, we prove Theorem \ref{intro thm P}.

\begin{nota}
In this paper, we fix a field $k$. We denote by $\Smk$ the category of smooth $k$-schemes. Every sheaf is considered on the Nisnevich topology on $\Smk$. Objects of an abelian category are regarded as complexes concentrated in degree zero.
\end{nota}

\begin{acknow}
I would like to thank my adviser Shohei Ma for many useful advices. I would also like to thank Tom Bachmann for pointing out a mistake in the previous version. I would like to thank the referee for many detailed comments
which helped us to improve the presentation.
\end{acknow}

\section{Relative $\aone$-homotopy and $\aone$-homology}\label{Relative Hurewicz Whitehead}

In this section, we give basic definitions of relative $\aone$-homotopy and $\aone$-homology, and establish a weak relative $\aone$-Hurewicz theorem. We also compare $\aone$-homology and Suslin homology. We refer to \cite{MV}, \cite{MVW}, \cite{SV}, \cite{Mo2} and \cite{As} for the basic theory of $\aone$-homology and $\aone$-homotopy.

\subsection{Basic definitions}

Let $\Spck$ be the category of simplicial Nisnevich sheaves on $\Smk$ (called $k$-spaces) equipped with the $\aone$-model structure of \cite{MV}. We denote by $\pi_0^{\aone}(\mathcal{X})$ the sheaf of $\aone$-connected components of a $k$-space $\mathcal{X}$ and denote by $\pi_n^{\aone}(\mathcal{X},*)$ the $n$-th $\aone$-homotopy sheaf of a pointed $k$-space $(\mathcal{X},*)$ for $n \geq 0$. A $k$-space $\mathcal{X}$ is called $\aone$-connected if $\pi_0^{\aone}(\mathcal{X}) \cong \Spec k$, and a pointed $k$-space $(\mathcal{X},*)$ is called $\aone$-$n$-connected if $\mathcal{X}$ is $\aone$-connected and if $\pi_i^{\aone}(\mathcal{X},*) = 0$ for all $1 \leq i \leq n$. Especially, $(\mathcal{X},*)$ is called $\aone$-simply connected if it is $\aone$-$1$-connected. We consider a relative version of these definitions.

\begin{defi}\label{def of rel A1 homotopy}
For a morphism of pointed $k$-spaces $f : (\mathcal{X},*) \to (\mathcal{Y},*)$, the $i$-th $\aone$-homotopy sheaf of $f$ is defined by
\begin{equation*}
\pi^{\aone}_i(f) = 
\begin{cases}
\pi^{\aone}_{i-1}(F_{\aone}(f)) &(i > 0) \\
\Spec k &(i = 0),
\end{cases}
\end{equation*}
where $F_{\aone}(f)$ is the homotopy fiber with respect to the $\aone$-model structure of $\Spck$. When $f$ is an inclusion, we write $\pi^{\aone}_i(\mathcal{Y},\mathcal{X}) = \pi^{\aone}_i(f)$. A morphism (or a pair) is called $\aone$-$n$-connected if its $\aone$-homotopy sheaves in degree $\leq n$ are isomorphic to $\Spec k$.
\end{defi}

Since $F_{\aone}(f) \to \mathcal{X} \xrightarrow{f} \mathcal{Y}$ is a homotopy fiber sequence under the $\aone$-model structure, we obtain by \cite[Prop. 4.21]{AE} the long exact sequence
\begin{multline*}
\cdots \to \pi^{\aone}_i(\mathcal{X},*) \to \pi^{\aone}_i(\mathcal{Y},*) \to \pi^{\aone}_i(f) \to \pi^{\aone}_{i-1}(\mathcal{X},*) \to \cdots \\
\cdots \to \pi^{\aone}_0(\mathcal{X},*) \to \pi^{\aone}_0(\mathcal{Y},*) \to 0.
\end{multline*}

We fix a commutative unital ring $R$. Let $\Mod(R)$ be the category of Nisnevich sheaves of $R$-modules on $\Smk$ and $\mathbf{D}(k,R)$ be its unbounded derived category. We denote by $\mathbf{D}_{\aone}(k,R)$ the full subcategory of $\mathbf{D}(k,R)$ consisting of $\aone$-local complexes and $\ModA(R)$ for the full subcategory of $\Mod(R)$ consisting of strictly $\aone$-invariant sheaves. We write $\Ab = \Mod(\mathbb{Z})$ and $\AbA = \ModA(\mathbb{Z})$. For $\mathcal{X} \in \Spck$, we denote by $R(\mathcal{X})$ the simplicial Nisnevich sheaf of $R$-modules freely generated by $\mathcal{X}$ and $C(\mathcal{X};R)$ for its normalized chain complex. Let $L_{\aone}$ be a left adjoint of the inclusion $\mathbf{D}_{\aone}(k,R) \hookrightarrow \mathbf{D}(k,R)$ (see \cite[Thm. 2.5]{CD09}). We write $C^{\aone}(\mathcal{X};R) = L_{\aone}(C(\mathcal{X};R))$ and $\mathbf{H}_*^{\aone}(\mathcal{X};R) = H_*(C(\mathcal{X};R))$. We consider a relative version of these definitions.

\begin{defi}
Let $f : \mathcal{X} \to \mathcal{Y}$ be a morphism of $k$-spaces. We denote by $C(f;R)$ the mapping cone of $C(\mathcal{X};R) \to C(\mathcal{Y};R)$. We write $C^{\aone}(f;R) = L_{\aone}(C(f;R))$. We define the $i$-th $\aone$-homology sheaf $\mathbf{H}^{\aone}_i(f;R)$ as the homology of $C^{\aone}(\mathcal{X};R)$ in degree $i$. When $f$ is an inclusion, we write $C^{\aone}(\mathcal{Y},\mathcal{X};R) = C^{\aone}(f;R)$ and $\mathbf{H}^{\aone}_i(\mathcal{Y},\mathcal{X};R) = \mathbf{H}^{\aone}_i(f;R)$.
\end{defi}

\subsection{Relative $\aone$-Hurewicz theorem}

We denote by $\mathcal{G}r_k^{\aone}$ the category of strongly $\aone$-invariant sheaves of groups on $\Smk$ (see \cite[Def. 1.7]{Mo2}). For a functor $F : \mathscr{C} \to \mathscr{D}$ between categories, a morphism $f : A \to A'$ in $\mathscr{D}$ is called \textit{universal with respect to $F$} if it induces a bijection $\Hom_{\mathscr{D}}(A',F(B)) \cong \Hom_{\mathscr{D}}(A,F(B))$ for every $B \in \mathscr{C}$. Morel \cite{Mo2} proved the following $\aone$-Hurewicz theorem which relates $\aone$-homotopy and $\aone$-homology.

\begin{thm}[see {\cite[Thm. 6.35 and 6.37]{Mo2}}]\label{Morel's A1 Hurewicz}
Let $(\mathcal{X},*)$ be a pointed $k$-space. Then there exists a natural morphism
\begin{equation*}
h : \pi^{\aone}_n(\mathcal{X},*) \to \mathbf{H}^{\aone}_n(\mathcal{X};\mathbb{Z})
\end{equation*}
such that if $(\mathcal{X},*)$ is $\aone$-$(n-1)$-connected for $n \geq 1$, then $h$ is universal with respect to the inclusion $\AbA \hookrightarrow \mathcal{G}r^{\aone}_k$.
\end{thm}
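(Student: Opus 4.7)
The plan is to construct the Hurewicz map naturally via linearization and then verify the universality property by induction on $n$, reducing ultimately to $\aone$-Eilenberg--MacLane objects. For the construction: the canonical inclusion $(\mathcal{X},*) \hookrightarrow \mathbb{Z}(\mathcal{X})$ of a pointed $k$-space into the free simplicial sheaf of abelian groups it generates is a morphism of pointed $k$-spaces. Composing with $\aone$-localization and applying $\pi^{\aone}_n$ yields a natural morphism
\begin{equation*}
\pi^{\aone}_n(\mathcal{X},*) \to \pi^{\aone}_n(L_{\aone} \mathbb{Z}(\mathcal{X})).
\end{equation*}
Since $L_{\aone}\mathbb{Z}(\mathcal{X})$ is a simplicial object whose associated normalized chain complex is $C^{\aone}(\mathcal{X};\mathbb{Z})$, the Dold--Kan correspondence identifies its $\aone$-homotopy sheaves with $\mathbf{H}^{\aone}_*(\mathcal{X};\mathbb{Z})$, producing the Hurewicz map $h$.

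For universality in the case $n=1$, given $(\mathcal{X},*)$ $\aone$-connected and $M \in \AbA$, any morphism $\varphi : \pi^{\aone}_1(\mathcal{X},*) \to M$ factors through the strongly-$\aone$-invariant abelianization since $M$ is abelian. Morel's structural result that every strongly $\aone$-invariant abelian sheaf is automatically strictly $\aone$-invariant promotes this abelianization to an object of $\AbA$, and one identifies it with $\mathbf{H}^{\aone}_1(\mathcal{X};\mathbb{Z})$ via the $\aone$-local analogue of the classical Hopf exact sequence relating $\pi_1^{\mathrm{ab}}$ to $H_1$, truncated at the relevant terms under the $\aone$-connectedness hypothesis.

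For the case $n \geq 2$, the sheaf $\pi^{\aone}_n(\mathcal{X},*)$ itself lies in $\AbA$, so universality is equivalent to $h$ being an isomorphism. Under $\aone$-$(n-1)$-connectedness one constructs the first $\aone$-Postnikov section, a morphism $(\mathcal{X},*) \to K_{\aone}(\pi^{\aone}_n(\mathcal{X},*),n)$ that is an isomorphism on $\pi^{\aone}_n$ and whose homotopy fiber is $\aone$-$n$-connected. Applying the Hurewicz map on both sides and noting that the fiber contributes nothing in homological degree $n$, the problem reduces to the computation $\mathbf{H}^{\aone}_n(K_{\aone}(G,n);\mathbb{Z}) \cong G$ for $G \in \AbA$. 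This follows because the unit $K_{\aone}(G,n) \to L_{\aone}\mathbb{Z}(K_{\aone}(G,n))$ is a $\pi^{\aone}_n$-isomorphism on Eilenberg--MacLane sheaves by their defining universal property.

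The hard part is the inductive step: one needs an honest $\aone$-Postnikov theory together with the computation of the first nontrivial $\aone$-homology of $\aone$-Eilenberg--MacLane objects. Both of these rely crucially on Morel's strict-$\aone$-invariance theorem, which is what guarantees that abelian $\aone$-homotopy sheaves and $\aone$-homology sheaves live in the same category $\AbA$ and makes the Postnikov layers representable by Eilenberg--MacLane sheaves within $\AbA$. The case $n=1$, while ostensibly simpler, is also delicate because the target category $\mathcal{G}r^{\aone}_k$ is nonabelian, so the formulation in terms of universality rather than isomorphism is essential.
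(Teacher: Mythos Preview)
Your construction of $h$ via linearization and Dold--Kan is fine and is essentially Morel's. The universality argument, however, has a genuine gap over non-perfect $k$. In both the $n=1$ and $n\geq 2$ cases you invoke Morel's theorem that a strongly $\aone$-invariant sheaf of \emph{abelian} groups is automatically strictly $\aone$-invariant, i.e.\ lies in $\AbA$; but that theorem is only known for perfect $k$ (this is exactly why Corollary~\ref{rel. A1Hure} carries the perfectness hypothesis). The whole point of stating the conclusion as ``$h$ is universal with respect to $\AbA\hookrightarrow\mathcal{G}r^{\aone}_k$'' rather than ``$h$ is an isomorphism'' is to obtain a statement valid over arbitrary $k$, where one does \emph{not} know that $\pi^{\aone}_n(\mathcal{X},*)\in\AbA$. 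Your reduction ``universality $\Leftrightarrow$ isomorphism'' for $n\geq 2$ therefore collapses precisely the distinction the theorem is designed to preserve, and your $n=1$ argument likewise needs the abelianization to land in $\AbA$.

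There is also a structural issue in the Postnikov step even over perfect $k$: from the fiber sequence $F\to\mathcal{X}\to K_{\aone}(\pi^{\aone}_n,n)$ with $F$ $\aone$-$n$-connected you assert that ``the fiber contributes nothing in homological degree $n$'', but $\aone$-homology has long exact sequences for \emph{cofiber} sequences, not fiber sequences, so this requires an additional argument (a Serre-type spectral sequence, or the very relative Hurewicz statement one is aiming at). The route Morel takes, and which the paper follows in the relative case (Proposition~\ref{rel. A1Hure not perfect}), sidesteps both problems: apply the \emph{classical} simplicial Hurewicz theorem stalkwise to the $\aone$-fibrant replacement to get $\pi^{\aone}_n(\mathcal{X},*)\cong\mathbf{H}_n(\mathrm{Ex}_{\aone}(\mathcal{X}))$ together with vanishing of the lower $\mathbf{H}_i$, and then use the adjunction for $L_{\aone}$ and \cite[Thm.~6.22]{Mo2} to identify $\Hom_{\Ab}(\pi^{\aone}_n,A)\cong\Hom_{\Ab}(\mathbf{H}^{\aone}_n,A)$ for every $A\in\AbA$. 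This never requires $\pi^{\aone}_n$ itself to be strictly $\aone$-invariant, which is why it yields universality over general $k$.
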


Morel proves that $h$ is an isomorphism assuming $k$ perfect; for the above assertion his argument works for general $k$. The following is a relative version of Theorem \ref{Morel's A1 Hurewicz}.

\begin{prop}\label{rel. A1Hure not perfect}
Let $f : (\mathcal{X},*) \to (\mathcal{Y},*)$ be a morphism from an $\aone$-simply connected pointed $k$-space to an $\aone$-connected $k$-space. Suppose that $f$ is $\aone$-$(n-1)$-connected for $n \geq 2$. Then there exists a universal morphism
\begin{equation*}
h : \pi^{\aone}_n(f) \to \mathbf{H}^{\aone}_n(f;\mathbb{Z})
\end{equation*}
with respect to the inclusion $\AbA \hookrightarrow \mathcal{G}r^{\aone}_k$.
\end{prop}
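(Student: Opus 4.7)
The plan is to reduce the statement to Morel's absolute $\aone$-Hurewicz theorem (Theorem \ref{Morel's A1 Hurewicz}) applied to the homotopy fiber $F := F_{\aone}(f)$, which, after replacing $f$ by a fibration up to $\aone$-weak equivalence, I may treat as an actual fiber. From the long exact sequence of $\aone$-homotopy sheaves for $F \to \mathcal{X} \to \mathcal{Y}$, the assumption that $f$ is $\aone$-$(n-1)$-connected, combined with $\pi_1^{\aone}(\mathcal{X},*) = 0$, forces $\pi_i^{\aone}(F,*) = 0$ for $0 \leq i \leq n-2$; moreover, $\pi_{n-1}^{\aone}(F,*) = \pi_n^{\aone}(f)$ by Definition \ref{def of rel A1 homotopy}. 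A by-product is that $\mathcal{Y}$ is also $\aone$-simply connected.

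Since $F$ is a pointed $\aone$-$(n-2)$-connected $k$-space with $n-1 \geq 1$, Theorem \ref{Morel's A1 Hurewicz} supplies a universal morphism with respect to $\AbA \hookrightarrow \mathcal{G}r^{\aone}_k$,
\begin{equation*}
h_F : \pi_{n-1}^{\aone}(F,*) \to \mathbf{H}_{n-1}^{\aone}(F;\mathbb{Z}).
\end{equation*}
It then suffices to produce a natural isomorphism $\mathbf{H}_{n-1}^{\aone}(F;\mathbb{Z}) \cong \mathbf{H}_n^{\aone}(f;\mathbb{Z})$ of strictly $\aone$-invariant sheaves; defining $h$ as the resulting composite, universality of $h$ is inherited from $h_F$.

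The main obstacle is therefore this last isomorphism---the $\aone$-version of the classical identification of fiber homology with relative homology in the range controlled by the connectivity. My approach is to deploy an $\aone$-Leray--Serre spectral sequence
\begin{equation*}
E^2_{p,q} = \mathbf{H}^{\aone}_p(\mathcal{Y};\mathbf{H}^{\aone}_q(F;\mathbb{Z})) \Longrightarrow \mathbf{H}^{\aone}_{p+q}(\mathcal{X};\mathbb{Z}),
\end{equation*}
where the $\aone$-simple-connectivity of $\mathcal{Y}$ removes local-coefficient issues and the absolute Hurewicz theorem applied to $F$ yields $\mathbf{H}^{\aone}_q(F;\mathbb{Z}) = 0$ for $1 \leq q \leq n-2$. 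In the relevant range the spectral sequence degenerates to a five-term exact sequence
\begin{equation*}
\mathbf{H}_n^{\aone}(\mathcal{X}) \to \mathbf{H}_n^{\aone}(\mathcal{Y}) \xrightarrow{\tau} \mathbf{H}_{n-1}^{\aone}(F) \to \mathbf{H}_{n-1}^{\aone}(\mathcal{X}) \to \mathbf{H}_{n-1}^{\aone}(\mathcal{Y}) \to 0.
\end{equation*}
Comparing this with the long exact sequence in $\mathbf{H}^{\aone}_*$ arising from the mapping cone triangle $C^{\aone}(\mathcal{X};\mathbb{Z}) \to C^{\aone}(\mathcal{Y};\mathbb{Z}) \to C^{\aone}(f;\mathbb{Z})$ in $\mathbf{D}_{\aone}(k,\mathbb{Z})$, and invoking the five lemma---after producing a natural comparison arrow between $\mathbf{H}_{n-1}^{\aone}(F;\mathbb{Z})$ and $\mathbf{H}_n^{\aone}(f;\mathbb{Z})$ compatible with the transgression $\tau$ and the mapping-cone boundary---delivers the required isomorphism. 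Producing this comparison arrow and verifying its compatibility with the boundary maps is the delicate point that I expect to occupy the bulk of the work.
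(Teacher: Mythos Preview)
Your approach is genuinely different from the paper's, and while conceptually defensible, it leans on machinery that is not established in this setting and would be considerably harder to supply than the proposition itself. The key step in your plan is the identification $\mathbf{H}^{\aone}_{n-1}(F;\mathbb{Z}) \cong \mathbf{H}^{\aone}_n(f;\mathbb{Z})$, for which you invoke an $\aone$-Leray--Serre spectral sequence. Such a spectral sequence is not developed in Morel's framework or in the paper; constructing it, checking that the edge homomorphisms and transgression agree with $f_*$ and with the mapping-cone boundary, and producing the comparison arrow $\mathbf{H}^{\aone}_{n-1}(F) \to \mathbf{H}^{\aone}_n(f)$ compatibly with both exact sequences, is a substantial project in its own right. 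You honestly flag this as the delicate point, but the proposal gives no indication of how to build either the spectral sequence or the comparison map, so as written there is a real gap.

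The paper's proof sidesteps all of this by a much more elementary maneuver. Rather than comparing fiber homology to relative homology \emph{after} $\aone$-localization, it passes to the $\aone$-fibrant replacement $\mathrm{Ex}_{\aone}(f)$ and applies the \emph{classical} relative Hurewicz theorem for simplicial sets stalkwise. This directly yields $\pi^{\aone}_n(f) \cong \mathbf{H}_n(\mathrm{Ex}_{\aone}(f))$ together with $\mathbf{H}_i(\mathrm{Ex}_{\aone}(f)) = 0$ for $i < n$ (ordinary, non-localized homology of the fibrant model). From there only the $L_{\aone}$-adjunction and Morel's connectivity theorem are needed to reach $\mathbf{H}^{\aone}_n(f;\mathbb{Z})$; the identification $C^{\aone}(f;\mathbb{Z}) \simeq C^{\aone}(\mathrm{Ex}_{\aone}(f);\mathbb{Z})$ is immediate from a morphism of distinguished triangles, and the universal morphism is then the unit $\pi^{\aone}_n(f) \to H_0 L_{\aone}(\pi^{\aone}_n(f))$. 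The point is that the passage from relative homotopy to relative homology is carried out \emph{before} $\aone$-localization, where it is classical, rather than after, where it would require the new spectral-sequence machinery you propose.
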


\begin{proof}
We write $\mathbf{H}_n(-) = H_n(C(-;\mathbb{Z}))$. Let $\mathrm{Ex}_{\aone}$ be the resolution functor as in \cite[\S 3.2]{MV}. By applying the relative Hurewicz theorem of simplicial sets \cite[Thm. 3.11]{GJ} to all stalks, we have a natural isomorphism $\pi^{\aone}_i(f) \cong \mathbf{H}_i(\mathrm{Ex}_{\aone}(f))$ for all $1 \leq i \leq n$ and $\mathbf{H}_0(\mathrm{Ex}_{\aone}(f)) = 0$. Thus we obtain $\mathbf{H}_i(\mathrm{Ex}_{\aone}(f)) = 0$ for every $i \leq n-1$. By the isomorphism $\pi^{\aone}_n(f) \cong \mathbf{H}_n(\mathrm{Ex}_{\aone}(f))$, there exists an isomorphism
\begin{equation}\label{rel. A1Hur. 1st eq.}
\Hom_{\Ab}(\pi^{\aone}_n(f),A) \cong \Hom_{\Ab}(\mathbf{H}_n(\mathrm{Ex}_{\aone}(f)),A)
\end{equation}
for every $A \in \AbA$. Since $\mathbf{H}_i(\mathrm{Ex}_{\aone}(f)) = 0$ for all $i \leq n-1$, the adjunction on $L_{\aone}$ shows that
\begin{align}
\begin{split}
\Hom_{\Ab}(\mathbf{H}_n(\mathrm{Ex}_{\aone}(f)),A) &\cong \Hom_{\mathbf{D}(k,\mathbb{Z})}(C(\mathrm{Ex}_{\aone}(f);\mathbb{Z}),A[n]) \\
&\cong \Hom_{\mathbf{D}(k,\mathbb{Z})}(C^{\aone}(\mathrm{Ex}_{\aone}(f);\mathbb{Z}),A[n]).
\end{split}
\end{align}
Then $H_i(C^{\aone}(\mathrm{Ex}_{\aone}(f);\mathbb{Z})) = 0$ for all $i \leq n-1$ by \cite[Thm. 6.22]{Mo2}. Thus
\begin{equation}
\Hom_{\mathbf{D}(k,\mathbb{Z})}(C^{\aone}(\mathrm{Ex}_{\aone}(f);\mathbb{Z}),A[n]) \cong \Hom_{\Ab}(\mathbf{H}^{\aone}_n(\mathrm{Ex}_{\aone}(f);\mathbb{Z}),A).
\end{equation}
The morphism of distinguished triangles
\begin{equation*}
\begin{CD}
C^{\aone}(\mathcal{X};\mathbb{Z}) @>>> C^{\aone}(\mathcal{Y};\mathbb{Z}) @>>> C^{\aone}(f;\mathbb{Z}) @>>> \\
@V{\cong}VV @V{\cong}VV @VVV\\
C^{\aone}(\mathrm{Ex}_{\aone}(\mathcal{X});\mathbb{Z}) @>>> C^{\aone}(\mathrm{Ex}_{\aone}(\mathcal{Y});\mathbb{Z}) @>>> C^{\aone}(\mathrm{Ex}_{\aone}(f);\mathbb{Z}) @>>>
\end{CD}
\end{equation*}
in $\mathbf{D}(k,\mathbb{Z})$ induced by the natural transformation $\mathrm{Id} \to \mathrm{Ex}_{\aone}$ gives a quasi-isomorphism $C^{\aone}(f;\mathbb{Z}) \to C^{\aone}(\mathrm{Ex}_{\aone}(f);\mathbb{Z})$. Therefore, we obtain 
\begin{equation}\label{rel. A1Hur. last eq.}
\Hom_{\Ab}(\mathbf{H}^{\aone}_n(\mathrm{Ex}_{\aone}(f);\mathbb{Z}),A) \cong \Hom_{\Ab}(\mathbf{H}^{\aone}_n(f;\mathbb{Z}),A).
\end{equation}
By the isomorphisms \eqref{rel. A1Hur. 1st eq.}-\eqref{rel. A1Hur. last eq.}, we have
\begin{equation}\label{rel. A1Hur. lastlast eq.}
\Hom_{\Ab}(\pi^{\aone}_n(f),A) \cong \Hom_{\Ab}(\mathbf{H}^{\aone}_n(f;\mathbb{Z}),A) = \Hom_{\AbA}(\mathbf{H}^{\aone}_n(f;\mathbb{Z}),A)
\end{equation}
for all $A \in \AbA$. On the other hand, \cite[Thm. 6.22]{Mo2} leads to the adjunction
\begin{equation}\label{adj. Mod ModA}
\xymatrix{
\Mod(R) \ar@<1ex>[rr]^-{H_0 \circ L_{\aone}} & &\ModA(R) \ar@<1ex>[ll]
},
\end{equation}
and this induces a universal morphism
\begin{equation*}
h' : \pi^{\aone}_n(f) \to H_0(L_{\aone}(\pi^{\aone}_n(f)))
\end{equation*}
with respect to $\AbA \hookrightarrow \Ab$. By the isomorphism \eqref{rel. A1Hur. lastlast eq.}, we have
\begin{equation*}
\Hom_{\AbA}(\mathbf{H}^{\aone}_n(f;\mathbb{Z}),A) \cong \Hom_{\Ab}(\pi^{\aone}_n(f),A) \cong \Hom_{\AbA}(H_0(L_{\aone}(\pi^{\aone}_n(f))),A).
\end{equation*}
Thus Yoneda's lemma in $\AbA$ shows that
\begin{equation*}
H_0(L_{\aone}(\pi^{\aone}_n(f))) \cong \mathbf{H}^{\aone}_n(f;\mathbb{Z}).
\end{equation*}
Therefore, the composite morphism
\begin{equation*}
h : \pi^{\aone}_n(f) \xrightarrow{h'} H_0(L_{\aone}(\pi^{\aone}_n(f))) \cong \mathbf{H}^{\aone}_n(f;\mathbb{Z})
\end{equation*}
is universal with respect to $\AbA \hookrightarrow \Ab$. Since $\pi^{\aone}_n(f)$ and $\mathbf{H}^{\aone}_n(f;\mathbb{Z})$ are strongly $\aone$-invariant, the morphism $h$ is universal with respect to the inclusion $\AbA \hookrightarrow \mathcal{G}r^{\aone}_k$.
\end{proof}

When $k$ is perfect, Proposition \ref{rel. A1Hure not perfect} gives an isomorphism between the relative $\aone$-homotopy and the $\aone$-homology sheaves.

\begin{cor}\label{rel. A1Hure}
Let $f$ be as in Proposition \ref{rel. A1Hure not perfect}. If $k$ is perfect, then there exists a natural isomorphism $\pi^{\aone}_n(f) \cong \mathbf{H}^{\aone}_n(f;\mathbb{Z})$.
\end{cor}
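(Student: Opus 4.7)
The plan is to upgrade the universal morphism $h : \pi^{\aone}_n(f) \to \mathbf{H}^{\aone}_n(f;\mathbb{Z})$ supplied by Proposition \ref{rel. A1Hure not perfect} into an honest isomorphism. The strategy is to show that when $k$ is perfect, $\pi^{\aone}_n(f)$ itself already lies in $\AbA$; once this is established, the universal property with respect to $\AbA \hookrightarrow \mathcal{G}r^{\aone}_k$ forces $h$ to be an isomorphism by a standard Yoneda-style manipulation.

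To place $\pi^{\aone}_n(f)$ in $\AbA$, I invoke Morel's theorem \cite{Mo2} that over a perfect field every strongly $\aone$-invariant sheaf of abelian groups is automatically strictly $\aone$-invariant. Strong $\aone$-invariance of $\pi^{\aone}_n(f)$ is already recorded at the end of the proof of Proposition \ref{rel. A1Hure not perfect}, so only abelianness requires checking. I split into two cases. For $n \geq 3$, the identification $\pi^{\aone}_n(f) = \pi^{\aone}_{n-1}(F_{\aone}(f))$ realizes $\pi^{\aone}_n(f)$ as a homotopy sheaf in degree $\geq 2$, which is abelian for general reasons. For $n = 2$, I use the long exact sequence of $\aone$-homotopy sheaves together with the hypothesis $\pi^{\aone}_1(\mathcal{X},*) = 0$: this produces a surjection $\pi^{\aone}_2(\mathcal{Y},*) \twoheadrightarrow \pi^{\aone}_2(f)$, and since $\pi^{\aone}_2(\mathcal{Y},*)$ is abelian, so is its quotient $\pi^{\aone}_2(f)$.

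With $\pi^{\aone}_n(f) \in \AbA$ secured, the universality of $h$ applied to $A = \pi^{\aone}_n(f)$ yields a morphism $g : \mathbf{H}^{\aone}_n(f;\mathbb{Z}) \to \pi^{\aone}_n(f)$ satisfying $g \circ h = \mathrm{id}_{\pi^{\aone}_n(f)}$. Applying universality again with $A = \mathbf{H}^{\aone}_n(f;\mathbb{Z})$, the two morphisms $h \circ g$ and $\mathrm{id}_{\mathbf{H}^{\aone}_n(f;\mathbb{Z})}$ have the same precomposition with $h$ (namely $h$ itself), so the bijection forces $h \circ g = \mathrm{id}$. Hence $h$ admits a two-sided inverse and is an isomorphism, proving the corollary. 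I expect the only nontrivial point to be the abelianness check when $n = 2$, but this reduces to a direct inspection of the long exact sequence in low degrees using the $\aone$-simple connectivity of $\mathcal{X}$.
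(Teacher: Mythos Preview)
Your proposal is correct and follows essentially the same route as the paper's proof. The paper simply cites \cite[Cor.~6.2]{Mo2} to place $\pi^{\aone}_n(f)$ in $\AbA$ and then invokes Yoneda's lemma in $\AbA$; you unpack both steps explicitly---verifying abelianness by hand (including the $n=2$ case via the surjection from $\pi^{\aone}_2(\mathcal{Y},*)$) before applying Morel's strict-invariance theorem, and spelling out the Yoneda argument as the construction of a two-sided inverse from the universal property---but the underlying logic is identical.
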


\begin{proof}
Since $\pi^{\aone}_n(f) \in \AbA$ by \cite[Cor. 6.2]{Mo2}, Yoneda's lemma in $\AbA$ gives a natural isomorphism $\pi^{\aone}_n(f) \cong \mathbf{H}^{\aone}_n(f;\mathbb{Z})$.
\end{proof}

By Corollary \ref{rel. A1Hure}, we obtain the following.

\begin{cor}\label{A1-Whitehead theorem}
Assume $k$ perfect. Let $f : (\mathcal{X},*) \to (\mathcal{Y},*)$ be a morphism of $\aone$-simply connected pointed $k$-spaces and $n \geq 2$ an integer. If $\mathbf{H}^{\aone}_i(f;\mathbb{Z}) = 0$ for all $2 \leq i \leq n$, then $f$ is $\aone$-$n$-connected.
\end{cor}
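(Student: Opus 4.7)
The plan is to prove this by induction on $i$ from $1$ to $n$, showing $\pi^{\aone}_i(f) = 0$ at each stage, and using Corollary \ref{rel. A1Hure} as the key input once we are past the range covered by the long exact sequence argument.

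First, I would dispose of the base case $i=1$ directly from the long exact sequence
\begin{equation*}
\pi^{\aone}_1(\mathcal{X},*) \to \pi^{\aone}_1(\mathcal{Y},*) \to \pi^{\aone}_1(f) \to \pi^{\aone}_0(\mathcal{X},*) \to \pi^{\aone}_0(\mathcal{Y},*)
\end{equation*}
recorded just after Definition \ref{def of rel A1 homotopy}. The two outer groups on the right are both $\Spec k$ by $\aone$-connectedness, and the two groups on the left vanish by the $\aone$-simple-connectedness hypothesis on $\mathcal{X}$ and $\mathcal{Y}$. Hence $\pi^{\aone}_1(f) = 0$, i.e.\ $f$ is $\aone$-$1$-connected. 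Together with $\pi^{\aone}_0(f) = \Spec k$ by definition, this gives the base of the induction.

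For the inductive step, suppose $2 \leq m \leq n$ and that $f$ has already been shown to be $\aone$-$(m-1)$-connected. Since $k$ is perfect, $\mathcal{X}$ is $\aone$-simply connected, $\mathcal{Y}$ is $\aone$-connected, and $f$ is $\aone$-$(m-1)$-connected with $m \geq 2$, all hypotheses of Corollary \ref{rel. A1Hure} are satisfied, so there is a natural isomorphism
\begin{equation*}
\pi^{\aone}_m(f) \cong \mathbf{H}^{\aone}_m(f;\mathbb{Z}).
\end{equation*}
The right-hand side vanishes by the assumption $\mathbf{H}^{\aone}_i(f;\mathbb{Z}) = 0$ for $2 \leq i \leq n$, hence $\pi^{\aone}_m(f) = 0$, and $f$ is $\aone$-$m$-connected. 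Iterating up to $m=n$ yields the conclusion.

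There is no real obstacle here: the argument is a mechanical induction and the only thing to verify at each step is that the hypotheses of Corollary \ref{rel. A1Hure} remain in force. The mild subtlety is the handling of $i=1$, where the relative Hurewicz statement does not apply and one must instead invoke the long exact sequence of homotopy sheaves to bootstrap the induction.
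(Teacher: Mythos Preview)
Your argument is correct and matches the paper's own proof essentially verbatim: induction on $i$, with the $i=1$ case handled by the long exact sequence of relative $\aone$-homotopy sheaves and the inductive step by Corollary~\ref{rel. A1Hure}. The only difference is cosmetic (you spell out the verification of hypotheses a bit more fully), so there is nothing further to add.
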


\begin{proof}
We use induction on $i$. The assertion is clear for $i = 0$. We next consider the case $i = 1$. There exists an exact sequence
\begin{equation*}
\pi^{\aone}_1(\mathcal{Y},*) \to \pi^{\aone}_1(f) \to \pi^{\aone}_0(\mathcal{X},*).
\end{equation*}
Since $\pi^{\aone}_1(\mathcal{Y},*) = \pi^{\aone}_0(\mathcal{X},*) = 0$, we have $\pi^{\aone}_1(f) = 0$. Finally, let $i \geq 2$ and $\pi^{\aone}_{i-1}(f) = 0$. Then Corollary \ref{rel. A1Hure} shows that $\pi^{\aone}_i(f) \cong \mathbf{H}^{\aone}_i(f;\mathbb{Z})  = 0$. 
\end{proof}

\subsection{$\aone$-homology and Suslin homology}

Next, we compare $\aone$-homology and Suslin homology. Let $\mathbf{NST}_k(R)$ be the category of Nisnevich sheaves with transfers with coefficients in $R$, $\mathbf{D}_{tr}(k,R)$ be the unbounded derived category of $\mathbf{NST}_k(R)$, and $R_{tr} : \Smk \to \mathbf{NST}_k(R)$ be the functor as in \cite[Def. 2.8]{MVW} (with $R$-coefficients). Following \cite{Vo00}, we denote by $\mathbf{DM}^{eff}(k,R)$ the full subcategory of $\mathbf{D}_{tr}(k,R)$ consisting of $\aone$-local complexes. Let $L^{tr}_{\aone}$ be a left adjoint of the inclusion $\mathbf{DM}^{eff}(k,R) \hookrightarrow \mathbf{D}_{tr}(k,R)$ (see \cite[Thm. 2.5]{CD09}). We write $M(X;R) = L^{tr}_{\aone}(R_{tr}(X))$ for each $X \in \Smk$. The homology sheaves $\mathbf{H}_*^S(X;R) = H_*(M(X;R))$ are called the \textit{Suslin homology sheaves} of $X$ (cf. \cite{SV}). We introduce a relative version.

\begin{defi}
Let $f : X \to Y$ be a morphism in $\Smk$. Then we denote by $R_{tr}(f)$ the mapping cone of the morphism $R_{tr}(X) \to R_{tr}(Y)$. We write $M(f;R) = L_{\aone}^{tr}(R_{tr}(f))$. We define the $i$-th Suslin homology sheaf $\mathbf{H}^S_i(f;R)$ as the homology of $M(f;R)$ in degree $i$. When $f$ is an embedding, we write $R_{tr}(Y,X) = R_{tr}(f)$ and $\mathbf{H}^S_i(Y,X;R) = \mathbf{H}^S_i(f;R)$.
\end{defi}

Let $\mathbf{NST}_k^{\aone}(R)$ be the full subcategory of $\mathbf{NST}_k(R)$ consisting of strictly $\aone$-invariant sheaves. If $f$ is a morphism in $\Smk$, we have a morphism $\mathbf{H}^{\aone}_*(f;R) \to \mathbf{H}_*^S(f;R)$ in $\ModA(R)$. The following is an analogue of the result of Asok \cite[Cor. 3.4]{As} in higher degree.

\begin{prop}\label{Comp. A1 and Sulin}
Let $f : X \to Y$ be a morphism in $\Smk$ and let $n \geq 0$. If $\mathbf{H}^{\aone}_i(f;R) = 0$ for all $i < n$, then the natural morphism $\mathbf{H}^{\aone}_n(f;R) \to \mathbf{H}^S_n(f;R)$ is universal with respect to the canonical functor $\mathbf{NST}^{\aone}_k(R) \to \ModA(R)$.
\end{prop}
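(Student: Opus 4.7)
My plan is to mimic the proof of Proposition \ref{rel. A1Hure not perfect} by assembling a chain of natural isomorphisms between Hom groups in the abelian and derived categories. Write $u : \mathbf{NST}^{\aone}_k(R) \to \ModA(R)$ for the canonical functor and fix $B \in \mathbf{NST}^{\aone}_k(R)$. The target is a natural bijection
\begin{equation*}
\Hom_{\mathbf{NST}^{\aone}_k(R)}(\mathbf{H}^S_n(f;R), B) \cong \Hom_{\ModA(R)}(\mathbf{H}^{\aone}_n(f;R), u(B))
\end{equation*}
induced by the natural morphism $\mathbf{H}^{\aone}_n(f;R) \to u(\mathbf{H}^S_n(f;R))$.

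The central step is to identify the two derived Hom groups
\begin{equation*}
\Hom_{\mathbf{D}_{tr}(k,R)}(M(f;R), B[n]) \cong \Hom_{\mathbf{D}(k,R)}(C^{\aone}(f;R), u(B)[n]).
\end{equation*}
For this I would use the derived adjunction $Lu_! \dashv u$ between $\mathbf{D}(k,R)$ and $\mathbf{D}_{tr}(k,R)$, where $u_!$ is the left adjoint of the forgetful functor, determined on representables by $u_!(R(X)) = R_{tr}(X)$. Since $Lu_!$ preserves mapping cones, $Lu_!(C(f;R)) \simeq R_{tr}(f)$; and since each generating map $R_{tr}(X \times \aone) \to R_{tr}(X)$ is an isomorphism in $\mathbf{DM}^{eff}(k,R)$, the functor $Lu_!$ sends $\aone$-weak equivalences to $\aone$-weak equivalences. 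Hence the adjunction descends to $\widetilde{Lu_!} : \mathbf{D}_{\aone}(k,R) \rightleftarrows \mathbf{DM}^{eff}(k,R) : u$, with $\widetilde{Lu_!}(C^{\aone}(f;R)) \simeq M(f;R)$. Since $u(B)[n]$ is $\aone$-local, the identification follows by applying this descended adjunction to $B[n]$.

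Given this, I would apply the truncation technique of Proposition \ref{rel. A1Hure not perfect}. The vanishing $\mathbf{H}^{\aone}_i(f;R) = 0$ for $i < n$ yields
\begin{equation*}
\Hom_{\mathbf{D}(k,R)}(C^{\aone}(f;R), u(B)[n]) \cong \Hom_{\ModA(R)}(\mathbf{H}^{\aone}_n(f;R), u(B)).
\end{equation*}
To apply the analogous truncation on the Suslin side, I would first derive $\mathbf{H}^S_i(f;R) = 0$ for $i < n$ by running the central identification with $B[i]$ in place of $B[n]$: the right-hand side then vanishes by $t$-structure orthogonality, since $C^{\aone}(f;R)$ has homology concentrated in degrees $\geq n$ while $u(B)[i]$ sits in degree $i < n$. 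This gives $\Hom_{\mathbf{D}_{tr}(k,R)}(M(f;R), B[i]) = 0$ for every $B \in \mathbf{NST}^{\aone}_k(R)$, and the homotopy $t$-structure on $\mathbf{DM}^{eff}(k,R)$ (whose heart is $\mathbf{NST}^{\aone}_k(R)$) then forces the desired Suslin vanishing. Truncation now gives $\Hom_{\mathbf{D}_{tr}(k,R)}(M(f;R), B[n]) \cong \Hom_{\mathbf{NST}^{\aone}_k(R)}(\mathbf{H}^S_n(f;R), B)$, closing the chain.

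The main obstacle will be the descent of $Lu_! \dashv u$ to the $\aone$-local subcategories together with the key identity $\widetilde{Lu_!}(C^{\aone}(f;R)) \simeq M(f;R)$, which underpins the whole argument. A separate piece of bookkeeping is to trace the units and counits of the various adjunctions and verify that the resulting bijection is indeed the one induced by the natural Hurewicz-type comparison map $\mathbf{H}^{\aone}_n(f;R) \to u(\mathbf{H}^S_n(f;R))$.
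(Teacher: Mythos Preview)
Your proposal is correct and follows the same overall strategy as the paper (truncation plus adjunction plus Yoneda), but you make one step harder than necessary. The paper never descends the adjunction $Lu_!\dashv u$ to the $\aone$-local subcategories. Instead it works in the \emph{unlocalized} derived categories with the complexes $C(f;R)$ and $R_{tr}(f)$ and uses only that the target $A[n]$ is $\aone$-local. Since $L_{\aone}$ and $L_{\aone}^{tr}$ are left adjoint to the inclusions, one has
\[
\Hom_{\mathbf{D}(k,R)}(C(f;R),A[n]) \cong \Hom_{\mathbf{D}(k,R)}(C^{\aone}(f;R),A[n]),\qquad
\Hom_{\mathbf{D}_{tr}(k,R)}(R_{tr}(f),A[n]) \cong \Hom_{\mathbf{D}_{tr}(k,R)}(M(f;R),A[n]),
\]
so your ``central identification'' follows immediately from the ordinary adjunction $\mathbf{D}(k,R)\rightleftarrows\mathbf{D}_{tr}(k,R)$ applied to $C(f;R)$ and $R_{tr}(f)$, without needing to check that $\widetilde{Lu_!}$ exists or that $\widetilde{Lu_!}(C^{\aone}(f;R))\simeq M(f;R)$. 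What you flag as the ``main obstacle'' simply disappears. The paper also phrases the Suslin vanishing below degree $n$ as an induction on $n$ combined with Yoneda in $\mathbf{NST}^{\aone}_k(R)$, which is exactly what your $t$-structure argument unpacks to; invoking the full homotopy $t$-structure on $\mathbf{DM}^{eff}(k,R)$ is not needed, and avoiding it keeps the argument free of any extra hypotheses on $k$ beyond knowing $\mathbf{H}^S_i(f;R)\in\mathbf{NST}^{\aone}_k(R)$.
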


\begin{proof}
By induction on $n$ and Yoneda's lemma in $\mathbf{NST}^{\aone}_k(R)$, we may assume that $\mathbf{H}^S_i(f;R) = 0$ for all $i < n$. For $A \in \mathbf{NST}_k^{\aone}(R)$, we have
\begin{align}
\Hom_{\ModA(R)}(\mathbf{H}^{\aone}_n(f;R),A) &\cong \Hom_{\mathbf{D}(k,R)}(C(f;R),A[n]), \label{eqalpha} \\
\Hom_{\mathbf{NST}^{\aone}_k(R)}(\mathbf{H}^S_i(f;R),A) &\cong \Hom_{\mathbf{D}_{tr}(k,R)}(R_{tr}(f),A[n]). \label{eqalpha2}
\end{align}
On the other hand, the adjunction $\mathbf{D}(k,R) \rightleftarrows \mathbf{D}_{tr}(k,R)$ gives
\begin{equation}
\Hom_{\mathbf{D}_{tr}(k,R)}(R_{tr}(f),A[n]) \xrightarrow{\cong} \Hom_{\mathbf{D}(k,R)}(R(f),A[n]) \label{eqbeta}
\end{equation}
such that the diagram
\begin{equation*}
\begin{CD}
\Hom_{\mathbf{NST}^{\aone}_k(R)}(\mathbf{H}^S_n(f;R),A) @>{\cong}>> \Hom_{\mathbf{D}_{tr}(k,R)}(R_{tr}(f),A[n]) \\
@VVV @V{\cong}VV \\
\Hom_{\ModA(R)}(\mathbf{H}^{\aone}_n(f;R),A) @>{\cong}>> \Hom_{\mathbf{D}(k,R)}(R(f),A[n])
\end{CD}
\end{equation*}
commutes.
\end{proof}

By Proposition \ref{Comp. A1 and Sulin} and Yoneda's lemma in $\mathbf{NST}^{\aone}_k(R)$, if $\mathbf{H}^{\aone}_i(f;R) = 0$ for all $i < n$, then $\mathbf{H}^S_i(f;R) = 0$ for all $i < n$. By using the $\aone$-Hurewicz theorem, we obtain an analogue of the Hurewicz theorem relating $\aone$-homotopy and Suslin homology.

\begin{cor}\label{Hure A1hmtpy Suslin}
(1) Let $(X,*)$ be a pointed smooth $k$-scheme which is $\aone$-$(n-1)$-connected for $n \geq 1$. Then there exists a universal morphism $\pi^{\aone}_n(X,*) \to \mathbf{H}^S_n(X;\mathbb{Z})$ with respect to the functor $\mathbf{NST}^{\aone}_k(\mathbb{Z}) \to \mathcal{G}r_k^{\aone}$.

(2) Let $f$ be an $\aone$-$(n-1)$-connected morphism of $\aone$-simply connected pointed smooth $k$-schemes for $n \geq 2$. Then there exists a universal morphism $\pi^{\aone}_n(f) \to \mathbf{H}^S_n(f;\mathbb{Z})$ with respect to the functor $\mathbf{NST}^{\aone}_k(\mathbb{Z}) \to \mathcal{G}r^{\aone}_k$.
\end{cor}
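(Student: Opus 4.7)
The plan is to factor the desired Hurewicz morphism through the universal morphism to $\aone$-homology and then use Proposition \ref{Comp. A1 and Sulin} to pass from $\aone$-homology to Suslin homology. First I would produce the universal morphism $h_1 : \pi^{\aone}_n(-) \to \mathbf{H}^{\aone}_n(-;\mathbb{Z})$ with respect to $\AbA \hookrightarrow \mathcal{G}r^{\aone}_k$: Theorem \ref{Morel's A1 Hurewicz} supplies this in case (1) and Proposition \ref{rel. A1Hure not perfect} in case (2). Neither step requires $k$ to be perfect, which matches the fact that this corollary does not assume perfectness.

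Next I would extract from Proposition \ref{Comp. A1 and Sulin} a universal morphism $h_2 : \mathbf{H}^{\aone}_n(-;\mathbb{Z}) \to \mathbf{H}^S_n(-;\mathbb{Z})$ with respect to the forgetful functor $\mathbf{NST}^{\aone}_k(\mathbb{Z}) \to \AbA$. Its hypothesis that $\mathbf{H}^{\aone}_i(-;\mathbb{Z})$ vanishes for $i < n$ follows from the $\aone$-$(n-1)$-connectedness assumption together with \cite[Thm. 6.22]{Mo2}, exactly the argument used inside the proof of Proposition \ref{rel. A1Hure not perfect}. In case (1) I would instantiate Proposition \ref{Comp. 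A1 and Sulin} at the basepoint morphism $* : \Spec k \to X$, observing that for $n \geq 1$ both $\mathbf{H}^{\aone}_n$ and $\mathbf{H}^S_n$ of this morphism coincide with those of $X$.

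Finally, I would define the desired Hurewicz morphism as the composite $h := h_2 \circ h_1$ and verify that it is universal with respect to the composite functor $\mathbf{NST}^{\aone}_k(\mathbb{Z}) \to \AbA \hookrightarrow \mathcal{G}r^{\aone}_k$. This step is purely formal: for every $A \in \mathbf{NST}^{\aone}_k(\mathbb{Z})$, the universality of $h_1$ identifies $\Hom_{\mathcal{G}r^{\aone}_k}(\pi^{\aone}_n(-), A)$ with $\Hom_{\AbA}(\mathbf{H}^{\aone}_n(-;\mathbb{Z}), A)$, and the universality of $h_2$ identifies the latter with $\Hom_{\AbA}(\mathbf{H}^S_n(-;\mathbb{Z}), A)$; this last group equals $\Hom_{\mathcal{G}r^{\aone}_k}(\mathbf{H}^S_n(-;\mathbb{Z}), A)$ because $\mathbf{H}^S_n(-;\mathbb{Z})$ is already strictly $\aone$-invariant.

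The main obstacle is bookkeeping across the different ambient categories (with versus without transfers, strong versus strict $\aone$-invariance) and making sure the categorical hypotheses of Theorem \ref{Morel's A1 Hurewicz}, Proposition \ref{rel. A1Hure not perfect}, and Proposition \ref{Comp. A1 and Sulin} all line up; once this is pinned down, the proof reduces to composing two universal properties already established in the paper.
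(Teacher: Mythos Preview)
Your proposal is correct and follows exactly the same route as the paper: combine Theorem \ref{Morel's A1 Hurewicz} (resp.\ Proposition \ref{rel. A1Hure not perfect}) with Proposition \ref{Comp. A1 and Sulin} to obtain (1) (resp.\ (2)). The paper records this as a one-line proof, while you have spelled out the composition of universal morphisms and the verification of the vanishing hypothesis for Proposition \ref{Comp. A1 and Sulin}; these added details are accurate and useful but do not change the strategy.
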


\begin{proof}
(1) follows from Proposition \ref{Comp. A1 and Sulin} and Theorem \ref{Morel's A1 Hurewicz}, and (2) follows from Propositions \ref{Comp. A1 and Sulin} and \ref{rel. A1Hure not perfect}.
\end{proof}

\section{Proof of the main theorems}\label{hom eq ex}

In this section, we prove Theorems \ref{intro dim cor A1} and \ref{intro thm ex}.

\subsection{$\aone$-Whitehead theorem with dimension bound}

For the proof of Theorems 1.1 and 1.2, we consider the Nisnevich cohomology of morphisms. For a morphism $f : X \to Y$ in $\Smk$, $A \in \Ab$ and $n \geq 0$, we define
\begin{equation*}
H_{Nis}^n(f;A) = \Hom_{\mathbf{D}(k,\mathbb{Z})}(C(f;\mathbb{Z}),A[n]).
\end{equation*}
We write $H_{Nis}^n(X,U;A) = H_{Nis}^n(i;A)$ for an embedding $i : U \hookrightarrow X$.

\begin{prop}\label{dim thm A1 and Suslin}
Let $f : X \to Y$ be a morphism in $\Smk$. We write $d = {\max\{ \dim X + 1, \dim Y\}}$.

(1) If $\mathbf{H}^{\aone}_i(f;R) = 0$ for all $i \leq d$, then $f$ induces an isomorphism $C^{\aone}(X;R) \cong C^{\aone}(Y;R)$ in $\mathbf{D}(k,R)$.

(2) If $\mathbf{H}^S_i(f;R) = 0$ for all $i \leq d$, then $f$ induces an isomorphism of motives $M(X;R) \cong M(Y;R)$ in $\mathbf{DM}^{eff}(k,R)$.
\end{prop}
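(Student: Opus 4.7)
The plan is to show that $C^{\aone}(f;R)\cong 0$ in $\mathbf{D}(k,R)$, which is equivalent to $f$ inducing the claimed isomorphism $C^{\aone}(X;R)\cong C^{\aone}(Y;R)$. Since $\mathbf{H}^{\aone}_i(f;R)=H_i(C^{\aone}(f;R))$, this reduces to verifying $\mathbf{H}^{\aone}_i(f;R)=0$ for every integer $i$. The hypothesis handles $i\leq d$, so the only remaining task is to extend the vanishing to $i>d$, which I would do by induction on $i$.

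Suppose inductively that $\mathbf{H}^{\aone}_j(f;R)=0$ for all $j<i$, where $i>d$. Then $C^{\aone}(f;R)$ has no homology below degree $i$, and a standard truncation argument in $\mathbf{D}(k,R)$, combined with the $L_{\aone}$-adjunction (the same device as in the proof of Proposition \ref{rel. A1Hure not perfect}), yields for every $A\in\ModA(R)$ natural isomorphisms
\begin{equation*}
\Hom_{\ModA(R)}(\mathbf{H}^{\aone}_i(f;R),A) \cong \Hom_{\mathbf{D}(k,R)}(C^{\aone}(f;R),A[i]) \cong \Hom_{\mathbf{D}(k,R)}(C(f;R),A[i]) = H^i_{Nis}(f;A).
\end{equation*}
The long exact sequence attached to the mapping cone triangle $C(X;R)\to C(Y;R)\to C(f;R)\to$ reads
\begin{equation*}
\cdots \to H^{i-1}_{Nis}(X;A) \to H^i_{Nis}(f;A) \to H^i_{Nis}(Y;A) \to \cdots,
\end{equation*}
and the Nisnevich cohomological dimension bound $H^n_{Nis}(Z;A)=0$ for $n>\dim Z$ kills both outer terms as soon as $i-1>\dim X$ and $i>\dim Y$, both of which hold when $i>d=\max\{\dim X+1,\dim Y\}$. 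Hence $H^i_{Nis}(f;A)=0$; taking $A=\mathbf{H}^{\aone}_i(f;R)\in\ModA(R)$ and invoking Yoneda in $\ModA(R)$ forces $\mathbf{H}^{\aone}_i(f;R)=0$, closing the induction and proving (1).

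For (2) I would run exactly the same template: replace $\ModA(R)$ by $\mathbf{NST}^{\aone}_k(R)$ and $L_{\aone}$ by $L^{tr}_{\aone}$, and use the adjunction between $\mathbf{D}(k,R)$ and $\mathbf{D}_{tr}(k,R)$ already exploited in the proof of Proposition \ref{Comp. A1 and Sulin} to identify $\Hom_{\mathbf{D}_{tr}(k,R)}(R_{tr}(f),A[i])$ with $H^i_{Nis}(f;A)$, so that the same Nisnevich cohomological dimension argument applies verbatim. The only genuinely new input beyond formal derived-category manipulation is the Nisnevich cohomological dimension estimate; the main technical care lies in the truncation-adjunction isomorphism of the induction step and in the strict $\aone$-invariance of $\mathbf{H}^{\aone}_i(f;R)$ and $\mathbf{H}^S_i(f;R)$, which is what legitimizes the use of Yoneda in the appropriate heart.
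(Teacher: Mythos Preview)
Your proposal is correct and follows essentially the same route as the paper's own proof: induction on $i>d$ using the identification $\Hom_{\ModA(R)}(\mathbf{H}^{\aone}_i(f;R),A)\cong H^i_{Nis}(f;A)$ (the paper's equation \eqref{eqalpha}), the long exact sequence in Nisnevich cohomology, the dimension bound \cite[Thm.~1.32]{Nis}, and Yoneda in $\ModA(R)$; part (2) is handled identically via \eqref{eqalpha2} and \eqref{eqbeta}. The only cosmetic difference is that the paper packages the truncation/adjunction step by citing \eqref{eqalpha} directly rather than rederiving it.
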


\begin{proof}
(1) For each $m > d$, we only need to show that if $\mathbf{H}^{\aone}_i(f;R) = 0$ for all $i \leq m$, then $\mathbf{H}^{\aone}_{m+1}(f;R) = 0$. By \eqref{eqalpha}, there exists a natural isomorphism
\begin{equation*}
H_{Nis}^{m+1}(f;A) \cong \Hom_{\ModA(R)}(\mathbf{H}^{\aone}_{m+1}(f;R),A)
\end{equation*}
for every $A \in \ModA(R)$. The left hand side vanishes by the exact sequence
\begin{equation*}
\cdots \to H_{Nis}^m(X;A) \to H_{Nis}^{m+1}(f;A) \to H_{Nis}^{m+1}(Y;A) \to \cdots
\end{equation*}
and \cite[Thm. 1.32]{Nis}. Therefore, Yoneda's lemma in $\ModA(R)$ shows that $\mathbf{H}^{\aone}_{m+1}(f;R) = 0$.

(2) For each $m > d$, we only need to show that if $\mathbf{H}^S_i(f;R) = 0$ for all $i \leq m$, then $\mathbf{H}^{S}_{m+1}(f;R) = 0$. By \eqref{eqalpha2}, there exists a natural isomorphism
\begin{equation*}
\Hom_{\mathbf{D}_{tr}(k,R)}(R_{tr}(f),A[m+1]) \cong \Hom_{\mathbf{NST}^{\aone}_k(R)}(\mathbf{H}^S_{m+1}(f;R),A)
\end{equation*}
for every $A \in \mathbf{NST}_k^{\aone}(R)$. By \eqref{eqbeta}, the left hand side coincides with $H_{Nis}^{m+1}(f;A)$, and thus vanishes by the proof of (1). Since Yoneda's lemma in $\mathbf{NST}^{\aone}_k(R)$ shows that $\mathbf{H}^{\aone}_{m+1}(f;R) = 0$, we have $M(f;R) = 0$.
\end{proof}

We can now prove the $\aone$-Whitehead theorem with dimension bound.

\begin{thm}\label{dim cor A1}
Assume $k$ perfect. Let $f : (X,*) \to (Y,*)$ be a morphism of $\aone$-simply connected pointed smooth $k$-schemes. If $\mathbf{H}^{\aone}_i(f;R) = 0$ for all $2 \leq i \leq {\max\{ \dim X + 1, \dim Y\}}$, then $f$ is an $\aone$-weak equivalence.
\end{thm}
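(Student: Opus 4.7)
The plan is to reduce the hypothesis (vanishing of $\mathbf{H}^{\aone}_i(f;R)$ in the finite range $2 \leq i \leq d$) to vanishing in \emph{every} non-negative degree, and then feed this into Corollary~\ref{A1-Whitehead theorem} so that $f$ becomes $\aone$-$n$-connected for all $n$; the Morel-Voevodsky $\aone$-Whitehead theorem then delivers the $\aone$-weak equivalence. Throughout I specialize to $R = \mathbb{Z}$, which is what the Hurewicz input of Corollary~\ref{A1-Whitehead theorem} requires.

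First I would dispatch the two low-degree cases $i = 0, 1$ that the hypothesis does not cover, using the mapping cone long exact sequence
\begin{equation*}
\cdots \to \mathbf{H}^{\aone}_i(X;\mathbb{Z}) \to \mathbf{H}^{\aone}_i(Y;\mathbb{Z}) \to \mathbf{H}^{\aone}_i(f;\mathbb{Z}) \to \mathbf{H}^{\aone}_{i-1}(X;\mathbb{Z}) \to \cdots.
\end{equation*}
Since $(X,*)$ and $(Y,*)$ are both $\aone$-connected, $\mathbf{H}^{\aone}_0(X;\mathbb{Z})$ and $\mathbf{H}^{\aone}_0(Y;\mathbb{Z})$ coincide with the constant sheaf $\mathbb{Z}$ and the induced map is an isomorphism, forcing $\mathbf{H}^{\aone}_0(f;\mathbb{Z}) = 0$. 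Since $\pi^{\aone}_1(X,*) = \pi^{\aone}_1(Y,*) = 0$ by $\aone$-simple connectedness, Theorem~\ref{Morel's A1 Hurewicz} applied with $n = 1$ identifies $\mathbf{H}^{\aone}_1(X;\mathbb{Z})$ and $\mathbf{H}^{\aone}_1(Y;\mathbb{Z})$ with the universal object in $\AbA$ receiving a map from the trivial sheaf, which is itself trivial; the long exact sequence above then yields $\mathbf{H}^{\aone}_1(f;\mathbb{Z}) = 0$.

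Having established $\mathbf{H}^{\aone}_i(f;\mathbb{Z}) = 0$ for the full range $0 \leq i \leq d$, Proposition~\ref{dim thm A1 and Suslin}(1) bootstraps this to the vanishing $\mathbf{H}^{\aone}_i(f;\mathbb{Z}) = 0$ for all $i \geq 0$ (the induction uses the Nisnevich cohomological dimension bound on smooth $k$-schemes, which is why $d = \max\{\dim X + 1, \dim Y\}$ is the correct threshold). Corollary~\ref{A1-Whitehead theorem}, applied with $n$ arbitrarily large, then gives $\pi^{\aone}_i(f) = 0$ for every $i \geq 1$. Combining this with the $\aone$-homotopy long exact sequence attached to $f$ and the $\aone$-connectedness of both $X$ and $Y$, we obtain that $f$ induces isomorphisms on all $\pi^{\aone}_i$, so $f$ is an $\aone$-weak equivalence by the Morel-Voevodsky $\aone$-Whitehead theorem.

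The only substantive obstacle is Proposition~\ref{dim thm A1 and Suslin}(1), which is already in hand. The remaining steps are essentially bookkeeping: the edge cases $i = 0, 1$ are handled by the absolute Hurewicz theorem together with $\aone$-simple connectedness, and the passage from vanishing of all $\mathbf{H}^{\aone}_i(f;\mathbb{Z})$ to an $\aone$-weak equivalence is precisely the content of Corollary~\ref{A1-Whitehead theorem} plus the long exact sequence of $\aone$-homotopy sheaves.
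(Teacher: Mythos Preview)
Your proposal is correct and follows essentially the same approach as the paper's proof: handle the low degrees $i = 0, 1$ using $\aone$-simple connectedness and the absolute Hurewicz theorem, invoke Proposition~\ref{dim thm A1 and Suslin}(1) to extend the vanishing to all degrees, apply Corollary~\ref{A1-Whitehead theorem} to obtain $\pi^{\aone}_i(f) = 0$ for all $i$, and conclude via the Morel--Voevodsky Whitehead theorem. The paper's argument differs only cosmetically, citing \cite[Prop.~3.5]{As} for degree~$0$ and \cite[Thm.~6.35]{Mo2} for degree~$1$ directly rather than deducing these from Theorem~\ref{Morel's A1 Hurewicz} as you do.
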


\begin{proof}
The morphism $\mathbf{H}^{\aone}_0(X;\mathbb{Z}) \to \mathbf{H}^{\aone}_0(Y;\mathbb{Z})$ is an isomorphism by \cite[Prop. 3.5]{As}. Moreover, $\mathbf{H}^{\aone}_1(X;\mathbb{Z}) = \mathbf{H}^{\aone}_1(Y;\mathbb{Z}) = 0$ by \cite[Thm. 6.35]{Mo2}. Thus we have $\mathbf{H}^{\aone}_0(f;\mathbb{Z}) = \mathbf{H}^{\aone}_1(f;\mathbb{Z}) = 0$. Therefore, our assumption and Proposition \ref{dim thm A1 and Suslin} show that $\mathbf{H}^{\aone}_i(f;\mathbb{Z}) = 0$ for all $i \in \mathbb{Z}$. Thus we have $\pi^{\aone}_i(f) = 0$ for every $i \geq 0$ by Corollary \ref{A1-Whitehead theorem}. Since $f$ induces $\pi^{\aone}_i(X,*) \cong \pi^{\aone}_i(Y,*)$ for all $i \geq 0$, the morphism $f$ is an $\aone$-weak equivalence by \cite[Prop. 2.14]{MV}.
\end{proof}

Theorem \ref{dim cor A1} implies the following.

\begin{cor}\label{cor dim cor A1}
Assume $k$ perfect. Let $f : (X,*) \to (Y,*)$ be a morphism of pointed smooth $k$-schemes and let $d = {\max\{ \dim X + 1, \dim Y\}}$. Suppose that $(X,*)$ is $\aone$-simply connected and $(Y,*)$ is $\aone$-connected. If $f$ is $\aone$-$d$-connected, then $f$ is an $\aone$-weak equivalence.
\end{cor}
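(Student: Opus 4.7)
The plan is to reduce this corollary to Theorem \ref{dim cor A1} by trading the vanishing of relative $\aone$-homotopy sheaves for the vanishing of relative $\aone$-homology sheaves via the relative $\aone$-Hurewicz isomorphism (Corollary \ref{rel. A1Hure}).

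First I would upgrade $(Y,*)$ from $\aone$-connected to $\aone$-simply connected, which is required to invoke Theorem \ref{dim cor A1}. Since $f$ is $\aone$-$d$-connected with $d \geq 1$, one has $\pi^{\aone}_1(f) = 0$, so the segment
\begin{equation*}
\pi^{\aone}_1(X,*) \to \pi^{\aone}_1(Y,*) \to \pi^{\aone}_1(f)
\end{equation*}
of the long exact sequence associated to $f$, together with $\pi^{\aone}_1(X,*) = 0$, forces $\pi^{\aone}_1(Y,*) = 0$.

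Next I would show that $\mathbf{H}^{\aone}_i(f;\mathbb{Z}) = 0$ for every $i$ with $2 \leq i \leq d$. Fix such an $i$. Since $f$ is $\aone$-$d$-connected and $i - 1 \leq d - 1$, the map $f$ is in particular $\aone$-$(i-1)$-connected; as $(X,*)$ is $\aone$-simply connected and $(Y,*)$ is $\aone$-connected, Corollary \ref{rel. A1Hure} applies and gives an isomorphism $\pi^{\aone}_i(f) \cong \mathbf{H}^{\aone}_i(f;\mathbb{Z})$. The left-hand side vanishes because $i \leq d$, giving the claim.

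With both $(X,*)$ and $(Y,*)$ now $\aone$-simply connected and $\mathbf{H}^{\aone}_i(f;\mathbb{Z}) = 0$ throughout $2 \leq i \leq d$, Theorem \ref{dim cor A1} (applied with $R = \mathbb{Z}$) concludes that $f$ is an $\aone$-weak equivalence. The only point requiring care is the verification of the connectivity hypotheses of Corollary \ref{rel. A1Hure} at each step; this is automatic once $\pi^{\aone}_1(Y,*) = 0$ is in hand, so I do not expect a substantive obstacle beyond the bookkeeping above.
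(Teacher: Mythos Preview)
Your proposal is correct and follows essentially the same route as the paper: first use the long exact sequence together with $d\geq 1$ to upgrade $(Y,*)$ to $\aone$-simply connected, then convert the vanishing of $\pi^{\aone}_i(f)$ for $i\leq d$ into the vanishing of $\mathbf{H}^{\aone}_i(f;\mathbb{Z})$ via the relative Hurewicz result, and finally apply Theorem~\ref{dim cor A1}. The only cosmetic difference is that the paper cites Proposition~\ref{rel. A1Hure not perfect} rather than Corollary~\ref{rel. A1Hure}; since $k$ is perfect these are interchangeable here, and note that the hypotheses of either result only require $(Y,*)$ to be $\aone$-connected, so you do not actually need $\pi^{\aone}_1(Y,*)=0$ before invoking it.
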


\begin{proof}
By $d \geq 1$, the exact sequence
\begin{equation*}
0 = \pi^{\aone}_1(X,*) \to \pi^{\aone}_1(Y,*) \to \pi^{\aone}_1(f) = 0
\end{equation*}
shows that $(Y,*)$ is $\aone$-simply connected. On the other hand, by Proposition \ref{rel. A1Hure not perfect}, $\mathbf{H}^{\aone}_i(f;R) = 0$ for all $i \leq d$. Thus $f$ is an $\aone$-weak equivalence by Theorem \ref{dim cor A1}.
\end{proof}

Proposition \ref{dim thm A1 and Suslin} also has the following application.

\begin{cor}\label{susp a1}
Assume $k$ perfect. Let $f : (X,*) \to (Y,*)$ be a morphism of pointed smooth $k$-schemes. We assume that $\mathbf{H}^{\aone}_i(f;R) = 0$ for all $i \leq {\max\{ \dim X + 1, \dim Y\}}$. Then the morphism $S^2 \wedge f : S^2 \wedge X \to S^2 \wedge Y$ is an $\aone$-weak equivalence. Moreover, if $X$ and $Y$ are $\aone$-connected, then the morphism $S^1 \wedge f : S^1 \wedge X \to S^1 \wedge Y$ is an $\aone$-weak equivalence.
\end{cor}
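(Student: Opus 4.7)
The plan is to upgrade the hypothesis to vanishing of $C^{\aone}(f;R)$ in every degree via Proposition~\ref{dim thm A1 and Suslin}, transfer the vanishing through a simplicial suspension isomorphism for $\aone$-chains, and then invoke the $\aone$-Whitehead theorem Corollary~\ref{A1-Whitehead theorem} after establishing $\aone$-simple connectivity of $S^2\wedge X$ and $S^2\wedge Y$. The first step applies Proposition~\ref{dim thm A1 and Suslin}(1) to the hypothesis, yielding $C^{\aone}(f;R)\simeq 0$ in $\mathbf{D}(k,R)$, and in particular $\mathbf{H}^{\aone}_i(f;\mathbb{Z})=0$ for every $i\in\mathbb{Z}$ (specializing to $R=\mathbb{Z}$, which is what we will need downstream).

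Next I would invoke the suspension isomorphism for $\aone$-chains: for any pointed morphism $g$ of pointed $k$-spaces and any $n\geq 0$, the natural map $C^{\aone}(S^n\wedge g;R)\to C^{\aone}(g;R)[n]$ is a quasi-isomorphism, because the basepoint contributes an acyclic summand to the mapping cone $C(g;R)$ (so reduced and unreduced chains agree on mapping cones), while the reduced suspension shifts reduced chains by $n$. Combined with the previous step, this gives $\mathbf{H}^{\aone}_i(S^2\wedge f;\mathbb{Z})=0$ for every $i$. For the connectivity, the reduced suspension of any pointed $k$-space is $\aone$-connected (the basepoint of $S^1$ collapses all $0$-simplices), and the reduced suspension of an $\aone$-connected pointed $k$-space is $\aone$-simply connected by the standard Freudenthal-type argument, so $S^2\wedge X=S^1\wedge(S^1\wedge X)$ and $S^2\wedge Y$ are both $\aone$-simply connected regardless of the connectivity of $X$ and $Y$.

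Finally, Corollary~\ref{A1-Whitehead theorem} applied to $S^2\wedge f$ for arbitrarily large $n$ yields $\pi^{\aone}_i(S^2\wedge f)=0$ for every $i\geq 0$, so $S^2\wedge f$ induces isomorphisms on all $\aone$-homotopy sheaves and is therefore an $\aone$-weak equivalence by \cite[Prop.~2.14]{MV}. The \emph{moreover} clause follows by the identical strategy with $S^1$ in place of $S^2$: the additional hypothesis that $X$ and $Y$ are $\aone$-connected is precisely what is required to make $S^1\wedge X$ and $S^1\wedge Y$ $\aone$-simply connected, after which the argument proceeds verbatim. I expect the main obstacle to be verifying the suspension isomorphism of Step~2 for the mapping cone, which requires $L_{\aone}$ to commute with simplicial suspension up to natural quasi-isomorphism and the cancellation of basepoint summands in $C(g;R)$; both should follow from monoidality of $L_{\aone}$ and standard simplicial manipulations, but this is the only step demanding care.
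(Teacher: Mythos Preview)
Your proposal follows essentially the same route as the paper: apply Proposition~\ref{dim thm A1 and Suslin} to obtain vanishing of $\mathbf{H}^{\aone}_i(f)$ in all degrees, transfer this through the suspension isomorphism, establish the requisite $\aone$-simple connectivity of the suspensions, and invoke Corollary~\ref{A1-Whitehead theorem} (followed by \cite[Prop.~2.14]{MV}). One caveat: the $\aone$-connectivity of $S^1\wedge\mathcal{X}$ for arbitrary pointed $\mathcal{X}$, and the $\aone$-simple connectivity of $S^1\wedge\mathcal{X}$ when $\mathcal{X}$ is $\aone$-connected, are not the elementary simplicial or Freudenthal-type facts you sketch but rather consequences of Morel's unstable connectivity theorem \cite[Thm.~6.38]{Mo2}, which the paper cites explicitly.
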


\begin{proof}
For a $k$-space $\mathcal{X}$, the suspension $S^1 \wedge \mathcal{X}$ is $\aone$-connected by \cite[Thm. 6.38]{Mo2}. Similarly, if $\mathcal{X}$ is $\aone$-connected, then $S^1 \wedge \mathcal{X}$ is $\aone$-simply connected. Since $f$ induces isomorphisms for all $\aone$-homology sheaves by Proposition \ref{dim thm A1 and Suslin}, so does $S^1 \wedge f$. Therefore, Corollary \ref{A1-Whitehead theorem} shows that $S^2 \wedge f$ is an $\aone$-weak equivalence. Similarly, $S^1 \wedge f$ is an $\aone$-weak equivalence when $X$ and $Y$ are $\aone$-connected.
\end{proof}

\subsection{$\aone$-excision theorem}

We next prove an excision theorem for $\aone$- and Suslin homology. This is an analogue of \cite[Prop. 3.8]{As} in higher degree.

\begin{thm}\label{thm. A1-excision}
Let $X$ be a smooth $k$-scheme and $U$ a Zariski open set of $X$ whose complement has codimension $r$. Then the morphisms $\mathbf{H}^{\aone}_i(U;R) \to \mathbf{H}^{\aone}_i(X;R)$ and $\mathbf{H}^S_i(U;R) \to \mathbf{H}^S_i(X;R)$ are isomorphisms for every $i < r - 1$ and epimorphisms for $i = r - 1$.
\end{thm}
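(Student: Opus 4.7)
The plan is to reduce the excision statement to showing $\mathbf{H}^{\aone}_i(X,U;R) = 0$ for all $i \leq r - 1$ (and the analogous vanishing for Suslin homology). Once this vanishing is known, the long exact sequence
\begin{equation*}
\cdots \to \mathbf{H}^{\aone}_{i+1}(X,U;R) \to \mathbf{H}^{\aone}_i(U;R) \to \mathbf{H}^{\aone}_i(X;R) \to \mathbf{H}^{\aone}_i(X,U;R) \to \cdots
\end{equation*}
associated to the distinguished triangle defining $C^{\aone}(X,U;R)$ (and its $R_{tr}$-counterpart for Suslin homology) immediately yields the isomorphism for $i < r - 1$ and the epimorphism for $i = r - 1$.

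I would prove the vanishing by induction on $i$. Suppose $\mathbf{H}^{\aone}_j(X,U;R) = 0$ for all $j < i$ with $i \leq r - 1$ (the hypothesis is vacuous for $i = 0$, and the base case $i = 0$ is also Asok's result \cite[Prop. 3.8]{As}). The vanishing of the lower $\aone$-homology lets me rerun the computation used in the paper to obtain \eqref{eqalpha}: combining the adjunction $L_{\aone} : \mathbf{D}(k,R) \rightleftarrows \mathbf{D}_{\aone}(k,R)$ with a standard $t$-structure argument whose heart is $\ModA(R)$ produces, for every $A \in \ModA(R)$, an isomorphism
\begin{equation*}
\Hom_{\ModA(R)}(\mathbf{H}^{\aone}_i(X,U;R),A) \cong \Hom_{\mathbf{D}(k,R)}(C(X,U;R),A[i]).
\end{equation*}
Comparing the long exact sequences for the pair $(X,U)$ identifies the right-hand side with the Nisnevich cohomology with support $H^i_Z(X;A)$, where $Z = X \setminus U$ is closed of codimension $r$.

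The crucial input is then Morel's purity for strictly $\aone$-invariant sheaves \cite{Mo2}: the Gersten/Cousin complex of $A$ on the smooth scheme $X$ realizes $H^i_Z(X;A)$ as the $i$-th cohomology of a complex whose first $r$ terms vanish identically, since those terms receive contributions only from points of $Z$ of codimension $< r$ in $X$, of which there are none. This gives $H^i_Z(X;A) = 0$ for all $i < r$, and Yoneda's lemma in $\ModA(R)$ then forces $\mathbf{H}^{\aone}_i(X,U;R) = 0$, closing the induction. The Suslin variant is handled by the parallel argument with $\ModA(R)$ replaced by $\mathbf{NST}^{\aone}_k(R)$ and $C(X,U;R)$ replaced by $R_{tr}(X,U)$: using \eqref{eqalpha2} together with \eqref{eqbeta} one reduces $\Hom_{\mathbf{NST}^{\aone}_k(R)}(\mathbf{H}^S_i(X,U;R),A)$ to the same cohomology group $H^i_Z(X;A)$, and the same Gersten vanishing closes that induction as well. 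The main obstacle is precisely the invocation of Morel's purity bound $H^i_Z(X;A) = 0$ for $i < \codim_X Z$; once this codimension input is in hand, everything else is formal manipulation of the long exact sequences and the $t$-structures on $\mathbf{D}_{\aone}(k,R)$ and $\mathbf{DM}^{eff}(k,R)$.
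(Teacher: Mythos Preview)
Your proposal is correct and matches the paper's proof almost exactly: both argue by induction on $i$, reduce via \eqref{eqalpha} and Yoneda to the vanishing of $H^i_{Nis}(X,U;A)$ for $A \in \ModA(R)$, and then invoke Morel's semi-purity for strictly $\aone$-invariant sheaves (the paper cites \cite[Lem.~6.4.4]{Mo1}, which is exactly the Gersten-type codimension bound you describe). The only cosmetic differences are that the paper deduces the Suslin case from the $\aone$-case via Proposition~\ref{Comp. A1 and Sulin} rather than rerunning the argument, and starts the induction with the negative-degree vanishing supplied by \cite[Thm.~6.22]{Mo2} (note that this vanishing is not literally vacuous as your parenthetical suggests---it requires the $\aone$-connectivity theorem).
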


\begin{proof}
It suffices to prove that $\mathbf{H}^{\aone}_i(X,U;R) = \mathbf{H}^S_i(X,U;R) = 0$ for all $i < r$. By Proposition \ref{Comp. A1 and Sulin}, we only need to prove this for the $\aone$-homology sheaves. We use induction on $i$. The case $i < 0$ follows from \cite[Thm. 6.22]{Mo2}. We assume $i \geq 0$ and $\mathbf{H}^{\aone}_j(X,U;R) = 0$ for all $j < i$. By \eqref{eqalpha}, we have
\begin{equation*}
H_{Nis}^i(X,U;A) \cong \Hom_{\ModA(R)}(\mathbf{H}^{\aone}_i(X,U;R),A)
\end{equation*}
for every $A \in \ModA(R)$. Then the left hand side vanishes by \cite[Lem. 6.4.4]{Mo1}. Therefore, we have $\mathbf{H}^{\aone}_i(X,U;R) = 0$ by Yoneda's lemma in $\ModA(R)$.
\end{proof}

Theorem \ref{thm. A1-excision} gives the excision theorem for $\aone$-homotopy.

\begin{cor}\label{A1 excision of homotopy}
Assume $k$ perfect. Let $(X,*)$ be a pointed smooth $k$-scheme and $(U,*)$ a pointed Zariski open set of $(X,*)$ whose complement has codimension $r$. If $(X,*)$ and $(U,*)$ are $\aone$-simply connected, then the morphism $\pi^{\aone}_i(U) \to \pi^{\aone}_i(X)$ is an isomorphism for every $i < r - 1$ and an epimorphism for $i = r - 1$. In other words, the pair $(X,U)$ is $\aone$-$(r-1)$-connected.
\end{cor}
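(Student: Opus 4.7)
The plan is to prove the stronger assertion that the pair $(X,U)$ is $\aone$-$(r-1)$-connected, i.e.\ $\pi^{\aone}_i(X,U) = 0$ for every $0 \leq i < r$. The conclusion of the corollary then follows at once from the long exact sequence
\[
\pi^{\aone}_{i+1}(X,U) \to \pi^{\aone}_i(U,*) \to \pi^{\aone}_i(X,*) \to \pi^{\aone}_i(X,U),
\]
since both flanking terms vanish when $i < r-1$ and only the right-hand term vanishes when $i = r-1$. The two ingredients I would combine are the vanishing $\mathbf{H}^{\aone}_i(X,U;\mathbb{Z}) = 0$ for all $i < r$, which is established inside the proof of Theorem \ref{thm. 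A1-excision}, and the relative $\aone$-Hurewicz isomorphism of Corollary \ref{rel. A1Hure}.

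I would argue by induction on $i$. The case $i = 0$ is the definition of $\pi^{\aone}_0(X,U)$, and the case $i = 1$ follows from the exact segment
\[
\pi^{\aone}_1(X,*) \to \pi^{\aone}_1(X,U) \to \pi^{\aone}_0(U,*) \to \pi^{\aone}_0(X,*),
\]
in which $\pi^{\aone}_1(X,*) = 0$ by $\aone$-simple connectedness of $X$ and the rightmost arrow is an isomorphism (both sheaves are $\Spec k$ by $\aone$-connectedness). For the inductive step with $2 \leq n < r$, assuming $\pi^{\aone}_j(X,U) = 0$ for all $j \leq n-1$, the inclusion $U \hookrightarrow X$ is an $\aone$-$(n-1)$-connected morphism from the $\aone$-simply connected $(U,*)$ to the $\aone$-connected $(X,*)$. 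Corollary \ref{rel. A1Hure} then supplies a natural isomorphism
\[
\pi^{\aone}_n(X,U) \cong \mathbf{H}^{\aone}_n(X,U;\mathbb{Z}),
\]
whose right-hand side vanishes because $n < r$, closing the induction.

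I do not foresee a serious technical obstacle: the entire argument is essentially a bookkeeping exercise that splices together the two cited results, provided one checks at each inductive step that the connectivity hypothesis needed by Corollary \ref{rel. A1Hure} has been accumulated. The perfectness of $k$ enters only through this use of Corollary \ref{rel. A1Hure}, which is precisely where $\aone$-simple connectedness of $U$ (and not merely $\aone$-connectedness) is consumed.
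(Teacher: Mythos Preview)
Your argument is correct and matches the paper's proof in substance: the paper invokes Theorem \ref{thm. A1-excision} for the vanishing $\mathbf{H}^{\aone}_i(X,U;\mathbb{Z}) = 0$ for $i < r$ and then applies Corollary \ref{A1-Whitehead theorem}, whose proof is exactly the induction via Corollary \ref{rel. A1Hure} that you have written out. In other words, you have simply inlined the proof of Corollary \ref{A1-Whitehead theorem} rather than citing it as a black box.
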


\begin{proof}
Since $\mathbf{H}^{\aone}_i(X,U;\mathbb{Z}) = 0$ for all $i < r$ by Theorem \ref{thm. A1-excision}, the pair $(X,U)$ is $\aone$-$(r-1)$-connected by Corollary \ref{A1-Whitehead theorem}.
\end{proof}

\section{$\aone$-homology of a hyperplane embedding}\label{sec. P1}

In this section, as an example of relative $\aone$-homology, we compute the $\aone$-homology of a hyperplane embedding $\mathbb{P}^{n-1} \hookrightarrow \mathbb{P}^{n}$ in degree $\leq n$. By a suitable linear change of coordinates, we may regard $\mathbb{P}^{n-1}$ as the hyperplane in $\mathbb{P}^n$ defined by $x_n = 0$, where $x_0, \ldots, x_n$ denote homogeneous coordinates on $\mathbb{P}^n$. Let $\underline{\mathbf{K}}^{MW}_n$ be the unramified Milnor-Witt $K$-theory defined by Morel \cite{Mo2}. For $A \in \Ab$, we write $A \otimes^{\aone} R = H_0(L_{\aone}(A \otimes R))$ which is called \textit{$\aone$-tensor product} by Morel \cite{Mo1}. Our main result is the following.

\begin{thm}\label{(Pn,Pn-1)}
For $0 \leq i \leq n$, $n > 0$, we have
\begin{equation*}
\mathbf{H}^{\aone}_i(\mathbb{P}^n,\mathbb{P}^{n-1};R) \cong 
\begin{cases}
\underline{\mathbf{K}}^{MW}_n \otimes^{\aone} R &(i = n) \\
0 &(i < n).
\end{cases}
\end{equation*}
In particular, when $i < n$, we have $\mathbf{H}^{\aone}_i(\mathbb{P}^n;R) \cong \mathbf{H}^{\aone}_i(\mathbb{P}^{i+1};R)$.
\end{thm}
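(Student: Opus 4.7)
The plan is to identify $\mathbf{H}^{\aone}_i(\mathbb{P}^n, \mathbb{P}^{n-1}; R)$ with the reduced $\aone$-homology of the cofiber $\mathbb{P}^n/\mathbb{P}^{n-1}$ in the pointed $\aone$-homotopy category, identify that cofiber with the motivic sphere $T^{\wedge n}$ where $T = \aone/\mathbb{G}_m$, and then invoke Morel's computation $\mathbf{H}^{\aone}_0(\mathbb{G}_m^{\wedge n};\mathbb{Z}) \cong \underline{\mathbf{K}}^{MW}_n$. Since the closed embedding $\mathbb{P}^{n-1} \hookrightarrow \mathbb{P}^n$ is a Zariski cofibration, the mapping cone of $C(\mathbb{P}^{n-1};R) \to C(\mathbb{P}^n;R)$ is naturally quasi-isomorphic to the reduced chain complex of $\mathbb{P}^n/\mathbb{P}^{n-1}$, and this equivalence is preserved by $L_{\aone}$.

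For the geometric identification, fix $p = [0:\cdots:0:1] \in \mathbb{P}^n$, which lies off $\mathbb{P}^{n-1}$, and consider the Zariski cover $\mathbb{P}^n = U \cup V$ with $U = \mathbb{A}^n$ (the chart $x_n \neq 0$) and $V = \mathbb{P}^n \setminus \{p\}$, so that $U \cap V = \mathbb{A}^n \setminus \{0\}$. Linear projection from $p$ makes $V \to \mathbb{P}^{n-1}$ into a line bundle whose zero section gives an $\aone$-weak equivalence $\mathbb{P}^{n-1} \xrightarrow{\sim} V$ compatible with the embedding into $\mathbb{P}^n$. Since the Zariski square is a homotopy pushout in the $\aone$-model structure, collapsing $V$ (hence $\mathbb{P}^{n-1}$) yields $\mathbb{P}^n/\mathbb{P}^{n-1} \simeq \mathbb{A}^n/(\mathbb{A}^n \setminus \{0\})$ in pointed $\aone$-homotopy, which agrees with $T^{\wedge n}$ by the Thom-space identity for direct sums of trivial line bundles.

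From the cofiber sequence $\mathbb{G}_m \hookrightarrow \aone \to T$ and the $\aone$-contractibility of $\aone$, one has $T \simeq S^1 \wedge (\mathbb{G}_m,1)$, so $\mathbb{P}^n/\mathbb{P}^{n-1} \simeq (S^1)^{\wedge n} \wedge \mathbb{G}_m^{\wedge n}$. The simplicial suspension isomorphism for $\aone$-homology then reduces the computation to the reduced $\aone$-homology of $\mathbb{G}_m^{\wedge n}$ shifted by $n$. This is concentrated in degree zero by Morel's connectivity theorem, where it equals $\underline{\mathbf{K}}^{MW}_n \otimes^{\aone} R$, obtained from the $\mathbb{Z}$-coefficient calculation of Morel together with the compatibility of $H_0 \circ L_{\aone}$ with $-\otimes_{\mathbb{Z}} R$ via the adjunction \eqref{adj. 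Mod ModA}. The consequence $\mathbf{H}^{\aone}_i(\mathbb{P}^n;R) \cong \mathbf{H}^{\aone}_i(\mathbb{P}^{i+1};R)$ for $i < n$ then drops out of the long exact sequence of the pair: the vanishing $\mathbf{H}^{\aone}_j(\mathbb{P}^m,\mathbb{P}^{m-1};R) = 0$ for $j < m$ gives $\mathbf{H}^{\aone}_i(\mathbb{P}^{m-1};R) \cong \mathbf{H}^{\aone}_i(\mathbb{P}^m;R)$ for $i \leq m - 2$, and iterating reduces $n$ down to $i+1$.

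The main obstacle is the geometric identification $\mathbb{P}^n/\mathbb{P}^{n-1} \simeq T^{\wedge n}$: one must verify that the Zariski square above is a homotopy pushout in the $\aone$-model structure, that the zero section identifies the embedding $\mathbb{P}^{n-1} \hookrightarrow V$ up to $\aone$-weak equivalence, and that the Thom-space identification is natural enough to transport $\aone$-homology. A secondary but more routine point is transferring Morel's integral identification of $\underline{\mathbf{K}}^{MW}_n$ to general $R$-coefficients, which should follow from \eqref{adj. Mod ModA}.
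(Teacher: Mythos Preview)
Your argument is correct and takes a genuinely different route from the paper. You identify the cofiber $\mathbb{P}^n/\mathbb{P}^{n-1}$ directly with the motivic sphere $T^{\wedge n}\simeq (S^1)^{\wedge n}\wedge\mathbb{G}_m^{\wedge n}$ via the Zariski pushout and the Thom space identity, and then read off both the vanishing in degrees $<n$ (from Morel's connectivity theorem applied to $\tilde C^{\aone}(\mathbb{G}_m^{\wedge n})$) and the value in degree $n$ (from Morel's identification $\tilde{\mathbf H}^{\aone}_0(\mathbb{G}_m^{\wedge n};\mathbb{Z})\cong\underline{\mathbf K}^{MW}_n$) in one stroke. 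The paper instead uses the tools it has just developed: the vanishing for $i<n$ is deduced from the $\aone$-excision theorem (Theorem~\ref{thm. A1-excision}) applied to $\mathbb{P}^n\setminus\{0\}\hookrightarrow\mathbb{P}^n$ combined with the vector bundle equivalence $\mathbb{P}^n\setminus\{0\}\simeq\mathbb{P}^{n-1}$, and the top degree is obtained by comparing the tautological triangle for the pair with the distinguished triangle of Proposition~\ref{dist tr of A1 of proj sp}, yielding $\mathbf H^{\aone}_n(\mathbb{P}^n,\mathbb{P}^{n-1};R)\cong\mathbf H^{\aone}_{n-1}(\mathbb{A}^n\setminus\{0\};R)$, after which Morel's Hurewicz theorem and $\pi^{\aone}_{n-1}(\mathbb{A}^n\setminus\{0\})\cong\underline{\mathbf K}^{MW}_n$ finish. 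Your approach is shorter and more conceptual; the paper's approach has the virtue of exhibiting the theorem as an application of the excision machinery built in Section~\ref{hom eq ex}, and it also treats the case $n=1$ separately, whereas your cofiber argument handles it uniformly.

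One small imprecision: you say the reduced $\aone$-homology of $\mathbb{G}_m^{\wedge n}$ is ``concentrated in degree zero''. Morel's connectivity theorem only gives vanishing in \emph{negative} degrees, not in positive ones; but since the statement only concerns $0\le i\le n$, you only need vanishing in negative degrees, so this does not affect your proof.
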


When $i = n$, we have the following description.

\begin{cor}\label{cor (Pn,Pn-1)}
There exists a morphism $\underline{\mathbf{K}}^{MW}_{n+1} \otimes^{\aone} R \to \mathbf{H}^{\aone}_{n}(\mathbb{P}^{n};R)$ such that
\begin{equation*}
\mathbf{H}^{\aone}_n(\mathbb{P}^{n+1};R) \cong \Coker (\underline{\mathbf{K}}^{MW}_{n+1} \otimes^{\aone} R \to \mathbf{H}^{\aone}_{n}(\mathbb{P}^{n};R)).
\end{equation*}
\end{cor}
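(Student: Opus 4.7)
The plan is to read off the cokernel description directly from the long exact sequence in $\aone$-homology associated with the pair $(\mathbb{P}^{n+1},\mathbb{P}^n)$, using Theorem \ref{(Pn,Pn-1)} (applied at levels $n$ and $n+1$) to pin down the relative terms that flank $\mathbf{H}^{\aone}_n$.

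First, by the definition of relative $\aone$-homology, $C^{\aone}(\mathbb{P}^{n+1},\mathbb{P}^n;R)$ is $L_{\aone}$ applied to the mapping cone of $C(\mathbb{P}^n;R) \to C(\mathbb{P}^{n+1};R)$, so there is a distinguished triangle
\begin{equation*}
C^{\aone}(\mathbb{P}^n;R) \to C^{\aone}(\mathbb{P}^{n+1};R) \to C^{\aone}(\mathbb{P}^{n+1},\mathbb{P}^n;R) \to C^{\aone}(\mathbb{P}^n;R)[1]
\end{equation*}
in $\mathbf{D}(k,R)$. Taking homology sheaves produces the long exact sequence
\begin{equation*}
\cdots \to \mathbf{H}^{\aone}_{n+1}(\mathbb{P}^{n+1},\mathbb{P}^n;R) \xrightarrow{\partial} \mathbf{H}^{\aone}_n(\mathbb{P}^n;R) \to \mathbf{H}^{\aone}_n(\mathbb{P}^{n+1};R) \to \mathbf{H}^{\aone}_n(\mathbb{P}^{n+1},\mathbb{P}^n;R) \to \cdots
\end{equation*}
in $\ModA(R)$.

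Next, I would substitute the values provided by Theorem \ref{(Pn,Pn-1)} (with $n$ there replaced by $n+1$): the top-degree case yields $\mathbf{H}^{\aone}_{n+1}(\mathbb{P}^{n+1},\mathbb{P}^n;R) \cong \underline{\mathbf{K}}^{MW}_{n+1} \otimes^{\aone} R$, while the vanishing range gives $\mathbf{H}^{\aone}_n(\mathbb{P}^{n+1},\mathbb{P}^n;R) = 0$ because $n < n+1$. The long exact sequence therefore collapses to
\begin{equation*}
\underline{\mathbf{K}}^{MW}_{n+1} \otimes^{\aone} R \xrightarrow{\partial} \mathbf{H}^{\aone}_n(\mathbb{P}^n;R) \to \mathbf{H}^{\aone}_n(\mathbb{P}^{n+1};R) \to 0.
\end{equation*}
Taking $\partial$ as the morphism claimed in the statement, the isomorphism $\mathbf{H}^{\aone}_n(\mathbb{P}^{n+1};R) \cong \Coker(\partial)$ is immediate from exactness.

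There is essentially no obstacle here beyond checking that Theorem \ref{(Pn,Pn-1)} really does supply both the top-degree identification and the vanishing of $\mathbf{H}^{\aone}_n(\mathbb{P}^{n+1},\mathbb{P}^n;R)$; once those two inputs are in place the corollary is a mechanical reading of the long exact sequence. Consequently the proof is short and purely formal, contingent only on the previously proved Theorem \ref{(Pn,Pn-1)}.
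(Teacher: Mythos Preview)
Your proposal is correct and follows essentially the same approach as the paper: both invoke the long exact sequence of the pair $(\mathbb{P}^{n+1},\mathbb{P}^n)$, apply Theorem~\ref{(Pn,Pn-1)} to identify $\mathbf{H}^{\aone}_{n+1}(\mathbb{P}^{n+1},\mathbb{P}^n;R)\cong \underline{\mathbf{K}}^{MW}_{n+1}\otimes^{\aone}R$ and to obtain $\mathbf{H}^{\aone}_{n}(\mathbb{P}^{n+1},\mathbb{P}^n;R)=0$, and then read off the cokernel. Your write-up is slightly more explicit about the distinguished triangle underlying the exact sequence, but the argument is the same.
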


\begin{proof}
By Theorem \ref{(Pn,Pn-1)}, the homology exact sequence
\begin{equation*}
\mathbf{H}^{\aone}_{n+1}(\mathbb{P}^{n+1},\mathbb{P}^n;R) \to \mathbf{H}^{\aone}_{n}(\mathbb{P}^n;R) \to \mathbf{H}^{\aone}_{n}(\mathbb{P}^{n+1};R) \to \mathbf{H}^{\aone}_{n}(\mathbb{P}^{n+1},\mathbb{P}^n;R) = 0
\end{equation*}
gives an isomorphism
\begin{align*}
\mathbf{H}^{\aone}_{n}(\mathbb{P}^{n+1};R) &\cong \Coker(\mathbf{H}^{\aone}_{n+1}(\mathbb{P}^{n+1},\mathbb{P}^n;R) \to \mathbf{H}^{\aone}_{n}(\mathbb{P}^n;R)) \\
&\cong \Coker (\underline{\mathbf{K}}^{MW}_{n+1} \otimes^{\aone} R \to \mathbf{H}^{\aone}_{n}(\mathbb{P}^{n};R)).
\end{align*}
\end{proof}

Theorem \ref{(Pn,Pn-1)} and Proposition \ref{Comp. A1 and Sulin} imply the following.

\begin{cor}\label{cor (Pn,Pn-1) sus}
For all $i < n$, we have $\mathbf{H}^{S}_i(\mathbb{P}^n,\mathbb{P}^{n-1};R) = 0$. Moreover, there exists a universal morphism $\underline{\mathbf{K}}^{MW}_n \otimes^{\aone} R \to \mathbf{H}^{S}_n(\mathbb{P}^n,\mathbb{P}^{n-1};R)$ with respect to the canonical functor $\mathbf{NST}^{\aone}_k(R) \to \ModA(R)$. In particular, when $i < n$ we have
\begin{equation*}
\mathbf{H}^{S}_i(\mathbb{P}^n;R) \cong \mathbf{H}^{S}_i(\mathbb{P}^{i+1};R).
\end{equation*}
\end{cor}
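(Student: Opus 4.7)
The plan is to transfer Theorem \ref{(Pn,Pn-1)} from $\aone$-homology to Suslin homology by applying Proposition \ref{Comp. A1 and Sulin} to the closed embedding $\mathbb{P}^{n-1} \hookrightarrow \mathbb{P}^n$. Since Theorem \ref{(Pn,Pn-1)} gives $\mathbf{H}^{\aone}_i(\mathbb{P}^n,\mathbb{P}^{n-1};R) = 0$ for every $i < n$, the remark immediately following Proposition \ref{Comp. A1 and Sulin} (namely that vanishing of relative $\aone$-homology in a range forces vanishing of relative Suslin homology in the same range) yields the first assertion $\mathbf{H}^{S}_i(\mathbb{P}^n,\mathbb{P}^{n-1};R) = 0$ for all $i < n$.

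In the critical degree $i = n$, Proposition \ref{Comp. A1 and Sulin} asserts that the natural morphism
\begin{equation*}
\mathbf{H}^{\aone}_n(\mathbb{P}^n,\mathbb{P}^{n-1};R) \to \mathbf{H}^{S}_n(\mathbb{P}^n,\mathbb{P}^{n-1};R)
\end{equation*}
is universal with respect to the canonical functor $\mathbf{NST}^{\aone}_k(R) \to \ModA(R)$. Substituting the identification $\mathbf{H}^{\aone}_n(\mathbb{P}^n,\mathbb{P}^{n-1};R) \cong \underline{\mathbf{K}}^{MW}_n \otimes^{\aone} R$ supplied by Theorem \ref{(Pn,Pn-1)} produces the desired universal morphism $\underline{\mathbf{K}}^{MW}_n \otimes^{\aone} R \to \mathbf{H}^{S}_n(\mathbb{P}^n,\mathbb{P}^{n-1};R)$.

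For the stabilization $\mathbf{H}^{S}_i(\mathbb{P}^n;R) \cong \mathbf{H}^{S}_i(\mathbb{P}^{i+1};R)$ in the range $i < n$, I would chain the long exact sequence of Suslin homology sheaves for each inclusion $\mathbb{P}^{m-1} \hookrightarrow \mathbb{P}^m$ with $i+2 \leq m \leq n$. The vanishing just established, applied with $n$ replaced by $m$, shows that both $\mathbf{H}^{S}_{i+1}(\mathbb{P}^m,\mathbb{P}^{m-1};R)$ and $\mathbf{H}^{S}_i(\mathbb{P}^m,\mathbb{P}^{m-1};R)$ are zero, since $i+1 \leq m-1 < m$. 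The corresponding portion of the long exact sequence then collapses to an isomorphism $\mathbf{H}^{S}_i(\mathbb{P}^{m-1};R) \cong \mathbf{H}^{S}_i(\mathbb{P}^m;R)$, and telescoping from $m = n$ down to $m = i+2$ yields the claim. There is no substantial obstacle in this argument: the essential work has already been carried out either in Theorem \ref{(Pn,Pn-1)} on the $\aone$-side or in the comparison Proposition \ref{Comp. A1 and Sulin}, and this corollary amounts to bookkeeping with the long exact sequence of a pair.
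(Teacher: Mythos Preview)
Your argument is correct and matches the paper's approach exactly: the paper simply states that the corollary follows from Theorem \ref{(Pn,Pn-1)} and Proposition \ref{Comp. A1 and Sulin}, and your write-up is the natural unpacking of that claim via the remark after Proposition \ref{Comp. A1 and Sulin} and the long exact sequence of the pair.
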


\begin{rem}
The vanishing $\mathbf{H}^{\aone}_i(\mathbb{P}^n,\mathbb{P}^{n-1};R) = 0$ for $i < n$ is an example where the Lefschetz hyperplane theorem holds. However, it is not true in general that $\mathbf{H}^{\aone}_i(X,H;R) = 0$ for $i < \dim X$ and $H \subseteq X$ a very ample divisor. Indeed, let $C \subseteq \mathbb{P}^2$ be a smooth plane curve of degree $\geq 3$. Since $C$ is not rational, it is not $\aone$-connected by \cite[Prop. 2.1.12]{AM}. Therefore, the canonical morphism $\mathbf{H}^{\aone}_0(C;R) \to R$ is not an isomorphism by \cite[Thm. 4.14]{As}. Thus the morphism $\mathbf{H}^{\aone}_0(C;R) \to \mathbf{H}^{\aone}_0(\mathbb{P}^2;R) \cong R$ is not an isomorphism.
\end{rem}

\subsection{A basic distinguished triangle}

For the proof of Theorem \ref{(Pn,Pn-1)}, we compute the mapping cone of $C^{\aone}(\mathbb{A}^{n}-\{0\};R) \to C^{\aone}(\mathbb{P}^{n-1};R)$. We first give a Zariski excision result for $\aone$-homology.

\begin{lem}\label{A1 homology excision thm}
Let $\{U, V\}$ be a Zariski covering of a smooth $k$-scheme $X$. Then the morphism $(U,U \cap V) \to (X,V)$ induces a quasi-isomorphism
\begin{equation*}
C^{\aone}(U,U \cap V;R) \cong C^{\aone}(X,V;R).
\end{equation*}
\end{lem}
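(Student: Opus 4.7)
The plan is to deduce the lemma from a standard Mayer--Vietoris distinguished triangle, and then apply $L_{\aone}$. Since $\{U,V\}$ is a Zariski covering of $X$, the closed complement $X \setminus U$ is contained in $V$, so the square
\begin{equation*}
\begin{CD}
U \cap V @>>> V \\
@VVV @VVV \\
U @>>> X
\end{CD}
\end{equation*}
is a Nisnevich distinguished square in the sense of \cite{MV}. By the Brown--Gersten descent property for Nisnevich sheaves (\cite[\S 3 Prop. 1.4]{MV}), such a square is sent to a homotopy pushout by the free $R$-module functor $R(-)$, yielding a Mayer--Vietoris distinguished triangle
\begin{equation*}
C(U \cap V; R) \to C(U; R) \oplus C(V; R) \to C(X; R) \to +1
\end{equation*}
in $\mathbf{D}(k,R)$.

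Next I would exploit the fact that a homotopy pushout square has trivial total cofiber. Concretely, the morphism of pairs $(U, U \cap V) \to (X, V)$ induces a commutative diagram of mapping cones, and the Mayer--Vietoris triangle shows that the induced map on horizontal cofibers
\begin{equation*}
C(U, U \cap V; R) \to C(X, V; R)
\end{equation*}
is a quasi-isomorphism in $\mathbf{D}(k,R)$. (One can see this either by invoking the octahedral axiom applied to $C(U \cap V; R) \to C(U; R) \oplus C(V; R) \to C(X; R)$ and comparing cofibers, or by rotating the Mayer--Vietoris triangle directly.)

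Finally, applying the left adjoint $L_{\aone}$ of \cite[Thm. 2.5]{CD09}, which is a triangulated functor and hence preserves mapping cones, produces the desired quasi-isomorphism $C^{\aone}(U, U \cap V; R) \xrightarrow{\sim} C^{\aone}(X, V; R)$.

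There is no substantial obstacle here: the whole argument is a formal consequence of Nisnevich descent together with the fact that $L_{\aone}$ is triangulated. The only point requiring care is recognizing the Zariski covering as a Nisnevich distinguished square (which uses $X \setminus U \subseteq V$) so that \cite[\S 3 Prop. 1.4]{MV} applies directly.
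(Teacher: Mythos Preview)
Your argument is correct and is, at its core, the same as the paper's: both reduce to the Mayer--Vietoris/short exact sequence
\[
0 \to R(U\cap V) \to R(U)\oplus R(V) \to R(X) \to 0
\]
of Nisnevich sheaves, then apply $L_{\aone}$ (which the paper invokes as exactness of $L_{\aone}$, you as ``$L_{\aone}$ is triangulated''). The only difference is packaging. The paper stays at the level of abelian sheaves: it records the sequence above as the pair of identities $R(U\cap V)=R(U)\cap R(V)$ and $R(X)=R(U)+R(V)$ (citing \cite[proof of Prop.~3.32]{AD}), and then the second isomorphism theorem gives $R(U)/R(U\cap V)\cong R(X)/R(V)$, which is the desired map of cones before localization. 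You instead pass immediately to $\mathbf{D}(k,R)$, phrase the same exact sequence as a distinguished triangle, and extract the comparison of cofibers via the octahedral/$3\times 3$ lemma. Your route is slightly more abstract but also more portable (it would work verbatim for any cd-structure with descent), whereas the paper's route is more elementary and avoids any appeal to axioms of triangulated categories. Either way, the substantive input is identical.
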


\begin{proof}
Since the functor $L_{\aone}$ is exact (see, \textit{e.g.}, \cite[Thm. 2.5]{CD09}), we only need to show that the morphism ${(U,U \cap V) \to (X,V)}$ induces a quasi-isomorphism
\begin{equation}\label{eq zariski ex}
C(U,U \cap V;R) \cong C(X,V;R).
\end{equation}
For each open set $W \subseteq X$, we regard $R(W)$ as a subsheaf of $R(X)$. Then by the short exact sequence in \cite[proof of Prop.3.32]{AD}, we see that $R(U \cap V) = R(U) \cap R(V)$ and $R(X) = R(U) + R(V)$. Thus we have an isomorphism
\begin{equation*}
R(U)/R(U \cap V) = R(U)/(R(U) \cap R(V)) \cong (R(U) + R(V))/R(V) = R(X)/R(V).
\end{equation*}
Finally, $R(U)/R(U \cap V)$ and $R(X)/R(V)$ are canonically quasi-isomorphic to $C(U,U \cap V;R)$ and $C(X,V;R)$, respectively. Therefore, we obtain \eqref{eq zariski ex}.
\end{proof}

We obtain the following distinguished triangle in $\mathbf{D}(k,R)$.

\begin{prop}\label{dist tr of A1 of proj sp}
For $n \geq 1$ and $* \in \mathbb{P}^n(k)$, we have a distinguished triangle
\begin{equation*}
C^{\aone}(\mathbb{A}^{n}-\{0\};R) \to C^{\aone}(\mathbb{P}^{n-1};R) \to C^{\aone}(\mathbb{P}^{n},*;R) \to
\end{equation*}
in $\mathbf{D}(k,R)$, where the first morphism is induced by the $\mathbb{G}_m$-quotient and the second morphism is induced by $(\mathbb{P}^{n-1},\emptyset) \to (\mathbb{P}^{n},*)$.
\end{prop}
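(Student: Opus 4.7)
The plan is to realize the asserted triangle via Zariski excision (Lemma \ref{A1 homology excision thm}) applied to a well-chosen open cover of $\mathbb{P}^n$. First I would choose a $k$-rational point $p \in (\mathbb{P}^n \setminus \mathbb{P}^{n-1})(k)$; since $\mathbb{P}^n$ is $\aone$-connected, the complex $C^{\aone}(\mathbb{P}^n,*;R)$ is canonically independent of $*$, so it suffices to take $* = p$. I would then consider the Zariski cover of $\mathbb{P}^n$ by $U = \mathbb{A}^n = \mathbb{P}^n \setminus \mathbb{P}^{n-1}$ and $V = \mathbb{P}^n \setminus \{p\}$, whose intersection $U \cap V = \mathbb{A}^n \setminus \{p\}$ is isomorphic to $\mathbb{A}^n-\{0\}$ after translating affine coordinates.

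Applying Lemma \ref{A1 homology excision thm} to this cover yields the bi-Cartesian square
\[
\begin{CD}
C^{\aone}(\mathbb{A}^n-\{0\};R) @>>> C^{\aone}(V;R) \\
@VVV @VVV \\
C^{\aone}(\mathbb{A}^n;R) @>>> C^{\aone}(\mathbb{P}^n;R)
\end{CD}
\]
in $\mathbf{D}(k,R)$: the lemma asserts precisely that the natural map between the cones of the two columns is a quasi-isomorphism, and in a triangulated setting this is equivalent to the cones of the two rows agreeing as well. I would then simplify the three non-target corners by $\aone$-invariance. First, $\mathbb{A}^n$ is $\aone$-contractible, so $C^{\aone}(\mathbb{A}^n;R) \cong R$ and the cone of the bottom row becomes $C^{\aone}(\mathbb{P}^n,*;R)$. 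Second, the linear projection $\pi \colon V \to \mathbb{P}^{n-1}$ away from $p$ realizes $V$ as the total space of a line bundle whose zero section $s \colon \mathbb{P}^{n-1} \hookrightarrow V$ is the restriction of the original hyperplane inclusion; by $\aone$-invariance $\pi$ and $s$ induce mutually inverse quasi-isomorphisms between $C^{\aone}(V;R)$ and $C^{\aone}(\mathbb{P}^{n-1};R)$. Third, the composite $\mathbb{A}^n-\{0\} \hookrightarrow V \xrightarrow{\pi} \mathbb{P}^{n-1}$ is, in affine coordinates, the $\mathbb{G}_m$-quotient $(y_0,\dots,y_{n-1}) \mapsto [y_0:\cdots:y_{n-1}]$.

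Putting these identifications together, the bi-Cartesian property yields the distinguished triangle
\[
C^{\aone}(\mathbb{A}^n-\{0\};R) \to C^{\aone}(\mathbb{P}^{n-1};R) \to C^{\aone}(\mathbb{P}^n,*;R) \to
\]
whose first map is the $\mathbb{G}_m$-quotient and whose second map is the composite $C^{\aone}(\mathbb{P}^{n-1};R) \xrightarrow{\sim} C^{\aone}(V;R) \to C^{\aone}(\mathbb{P}^n;R) \to C^{\aone}(\mathbb{P}^n,*;R)$, agreeing with the map induced by $(\mathbb{P}^{n-1},\emptyset) \to (\mathbb{P}^n,*)$ since $s$ composed with $V \hookrightarrow \mathbb{P}^n$ is the hyperplane inclusion. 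The main obstacle I anticipate is this last identification of maps -- verifying that the zero-section equivalence $\mathbb{P}^{n-1} \simeq V$ in $\mathbf{D}(k,R)$ intertwines $V \hookrightarrow \mathbb{P}^n$ with the hyperplane inclusion $\mathbb{P}^{n-1} \hookrightarrow \mathbb{P}^n$ -- but this is built into the geometric description of $V$ as the total space of the line bundle whose zero section is that embedding.
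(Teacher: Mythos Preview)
Your proposal is correct and follows essentially the same approach as the paper: the same Zariski cover of $\mathbb{P}^n$ is used (with your $U$ and $V$ playing the roles of the paper's $V$ and $U$, respectively), Lemma \ref{A1 homology excision thm} is applied, and the corners are identified via the line-bundle projection $\mathbb{P}^n\setminus\{p\}\to\mathbb{P}^{n-1}$ and the $\aone$-contractibility of $\mathbb{A}^n$. The only cosmetic difference is that the paper checks $C^{\aone}(\mathbb{P}^n,*;R)\cong C^{\aone}(\mathbb{P}^n,\mathbb{A}^n;R)$ by a short degree-by-degree homology computation, whereas you invoke the contractibility of $\mathbb{A}^n$ directly; both arguments are valid.
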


\begin{proof}
We write $U = {\mathbb{P}^{n} - \{ (0:\ldots:0:1) \}}$. Since the projection $\rho : U \to \mathbb{P}^{n-1}$ is a vector bundle, it is an $\aone$-weak equivalence by \cite[Example 2.2]{MV}. We denote by $V$ the Zariski open set of $\mathbb{P}^{n}$ defined by $x_n \neq 0$. Since the diagram
\begin{equation*}
\begin{CD}
\mathbb{A}^{n}-\{0\} @>{/\mathbb{G}_m}>> \mathbb{P}^{n-1} \\
@A{\cong}AA @A{\rho}AA \\
U \cap V @>{\subseteq}>> U
\end{CD}
\end{equation*}
commutes, we obtain the commutative diagram
\begin{equation}\label{diagram dist proj 1}
\begin{CD}
C^{\aone}(\mathbb{A}^{n}-\{0\};R) @>>> C^{\aone}(\mathbb{P}^{n-1};R) \\
@V{\cong}VV @V{\cong}VV \\
C^{\aone}(U \cap V;R) @>>> C^{\aone}(U;R).
\end{CD}
\end{equation}
On the other hand, Lemma \ref{A1 homology excision thm} gives the commutative diagram
\begin{equation}\label{diagram dist proj 2}
\begin{CD}
C^{\aone}(\mathbb{P}^{n-1};R) @>>> C^{\aone}(\mathbb{P}^{n},V;R) \\
@V{\cong}VV @V{\cong}VV \\
C^{\aone}(U;R) @>>> C^{\aone}(U,U \cap V;R).
\end{CD}
\end{equation}
By the diagrams \eqref{diagram dist proj 1} and \eqref{diagram dist proj 2}, we obtain an isomorphism of triangles
\begin{equation*}
\begin{CD}
C^{\aone}(\mathbb{A}^{n}-\{0\};R) @>>> C^{\aone}(\mathbb{P}^{n-1};R) @>>> C^{\aone}(\mathbb{P}^{n-1},V;R) @>>> \\
@V{\cong}VV @V{\cong}VV @V{\cong}VV \\
C^{\aone}(U \cap V;R) @>>> C^{\aone}(U;R) @>>> C^{\aone}(U,U \cap V;R) @>>>.
\end{CD}
\end{equation*}
Since the lower triangle is distinguished, so is the upper triangle. Therefore, we only need to show that for a $k$-rational point $* \in V(k)$, the morphism $(\mathbb{P}^{n-1},*) \to (\mathbb{P}^{n-1},V)$ induces a quasi-isomorphism $C^{\aone}(\mathbb{P}^{n},*;R) \cong C^{\aone}(\mathbb{P}^{n-1},V;R)$. Since $V \cong \mathbb{A}^{n}$, the exact sequence
\begin{equation*}
0 = \mathbf{H}^{\aone}_i(V;R) \to \mathbf{H}^{\aone}_i(\mathbb{P}^{n};R) \to \mathbf{H}^{\aone}_i(\mathbb{P}^{n},V;R) \to \mathbf{H}^{\aone}_{i-1}(V;R) = 0
\end{equation*}
shows that $\mathbf{H}^{\aone}_i(\mathbb{P}^{n};R) \cong \mathbf{H}^{\aone}_i(\mathbb{P}^{n},V;R)$ for all $i \geq 2$. For $i = 0,1$, there exists an exact sequence
\begin{multline*}
0 = \mathbf{H}^{\aone}_1(V;R) \to \mathbf{H}^{\aone}_1(\mathbb{P}^{n};R) \to \mathbf{H}^{\aone}_1(\mathbb{P}^{n},V;R) \\
\to \mathbf{H}^{\aone}_{0}(V;R) \to \mathbf{H}^{\aone}_0(\mathbb{P}^{n};R) \to \mathbf{H}^{\aone}_0(\mathbb{P}^{n},V;R) \to 0.
\end{multline*}
Then the morphism $\mathbf{H}^{\aone}_{0}(V;R) \to \mathbf{H}^{\aone}_0(\mathbb{P}^{n};R)$ is an isomorphism by \cite[Prop. 3.5]{As}. Therefore, we have $\mathbf{H}^{\aone}_0(\mathbb{P}^n,*;R) = \mathbf{H}^{\aone}_0(\mathbb{P}^{n},V;R) = 0$ and $\mathbf{H}^{\aone}_1(\mathbb{P}^{n};R) \cong \mathbf{H}^{\aone}_1(\mathbb{P}^{n},V;R)$. Thus $C^{\aone}(\mathbb{P}^{n},*;R) \to C^{\aone}(\mathbb{P}^{n-1},V;R)$ is a quasi-isomorphism.
\end{proof}

\subsection{Proof of Theorem \ref{(Pn,Pn-1)}}

We can now prove Theorem \ref{(Pn,Pn-1)}.

\begin{proof}[(Proof of Theorem \ref{(Pn,Pn-1)})]
We first prove $\mathbf{H}^{\aone}_i(\mathbb{P}^n,\mathbb{P}^{n-1};R) = 0$ for all $i < n$. The $\aone$-weak equivalence $\rho$ as in the proof of Proposition \ref{dist tr of A1 of proj sp} gives an isomorphism $\mathbf{H}^{\aone}_i(\mathbb{P}^{n-1};R) \xrightarrow{\cong} \mathbf{H}^{\aone}_i(\mathbb{P}^n - \{0\};R)$. On the other hand, $\mathbf{H}^{\aone}_i(\mathbb{P}^n - \{0\};R) \to \mathbf{H}^{\aone}_i(\mathbb{P}^n;R)$ is an isomorphism for all $i < n-1$ and an epimorphism for $i = n-1$ by Theorem \ref{thm. A1-excision}. Thus we have $\mathbf{H}^{\aone}_i(\mathbb{P}^n,\mathbb{P}^{n-1};R) = 0$.

Next, we prove $\mathbf{H}^{\aone}_n(\mathbb{P}^n,\mathbb{P}^{n-1};R) \cong \underline{\mathbf{K}}^{MW}_n \otimes^{\aone} R$ for all $n \geq 2$. By Proposition \ref{dist tr of A1 of proj sp}, there exists a morphism of distinguished triangles
\begin{align*}
\begin{CD}
C^{\aone}(\mathbb{P}^{n-1};R) @>>> C^{\aone}(\mathbb{P}^n;R) @>>> C^{\aone}(\mathbb{P}^n,\mathbb{P}^{n-1};R) @>>> \\
@| @V{\alpha}VV @V{\beta}VV \\
C^{\aone}(\mathbb{P}^{n-1};R) @>>> C^{\aone}(\mathbb{P}^n,*;R) @>>> C^{\aone}(\mathbb{A}^n - \{0\};R)[1] @>>>,
\end{CD}
\end{align*}
where $\alpha$ is induced by $(\mathbb{P}^n,\emptyset) \to (\mathbb{P}^n,*)$. Taking the homology exact sequence, we obtain $\mathbf{H}^{\aone}_n(\mathbb{P}^n,\mathbb{P}^{n-1};R) \cong \mathbf{H}^{\aone}_{n-1}(\mathbb{A}^n - \{0\};R)$ for all $n \geq 2$ by the five lemma. Note that the adjunctions $\Ab \rightleftarrows \Mod(R)$ and \eqref{adj. Mod ModA} show that the functor $-\otimes^{\aone}R : \Ab \to \ModA(R)$ is left adjoint to the canonical functor $\ModA(R) \to \Ab$. Moreover, for every $X \in \Smk$ which is $\aone$-$(n-1)$-connected, the adjunction $\Ab \rightleftarrows \ModA(R)$ leads to a natural isomorphism
\begin{equation}\label{aone tensor eq}
\mathbf{H}_{n}^{\aone}(X;R) \cong \mathbf{H}_{n}^{\aone}(X;\mathbb{Z}) \otimes^{\aone} R.
\end{equation}
Hence, we have
\begin{align}
\Hom_{\Mod(R)}(\mathbf{H}^{\aone}_{n-1}(\mathbb{A}^n - \{0\};R),A) 
&\cong \Hom_{\Ab}(\mathbf{H}^{\aone}_{n-1}(\mathbb{A}^n - \{0\};\mathbb{Z}),A) \label{last eq 2} \\
&\cong \Hom_{\Ab}(\pi^{\aone}_{n-1}(\mathbb{A}^n - \{0\}),A) \label{last eq 3} \\
&\cong \Hom_{\Ab}(\underline{\mathbf{K}}^{MW}_n,A) \label{last eq 4} \\
&\cong \Hom_{\ModA(R)}(\underline{\mathbf{K}}^{MW}_n \otimes^{\aone} R,A), \label{last eq 5}
\end{align}
where \eqref{last eq 2} and \eqref{last eq 5} follow from the adjunction ${\Ab \rightleftarrows \ModA}$, \eqref{last eq 3} from Theorem \ref{Morel's A1 Hurewicz} and \eqref{last eq 4} from \cite[Thm. 6.40]{Mo2}. Thus Yoneda's lemma in $\ModA(R)$ shows that
\begin{equation*}
\mathbf{H}^{\aone}_n(\mathbb{P}^n,\mathbb{P}^{n-1};R) \cong \mathbf{H}^{\aone}_{n-1}(\mathbb{A}^n - \{0\};R) \cong \underline{\mathbf{K}}^{MW}_n \otimes^{\aone} R.
\end{equation*}

Finally, we prove $\mathbf{H}^{\aone}_1(\mathbb{P}^1;R) \cong \underline{\mathbf{K}}^{MW}_1 \otimes^{\aone} R$. For $R  = \mathbb{Z}$, this is a direct consequence of \cite[Lem. 2.15 and Cor. 2.18]{MV} and \cite[Thm. 3.37]{Mo2}. Since $\mathbb{P}^1$ is $\aone$-connected, the general case follows from \eqref{aone tensor eq}.
\end{proof}



\begin{thebibliography}{99}
  \bibitem[AE]{AE} B. Antieau and E. Elmanto, \textit{A primer for unstable motivic homotopy theory}, Survey on recent developments in algebraic geometry, 305-370, Proc. Sympos. Pure Math., \textbf{95}, Amer. Math. Soc, Providence, RI (2017).
  \bibitem[As]{As} A. Asok, \textit{Birational invariants and $\mathbb{A}^1$-connectedness}, J. Reine Angew. Math., \textbf{681} (2012) 39-64.
  \bibitem[Ar]{Ar} M. Arkowitz, \textit{Introduction to homotopy theory}, Universitext, Springer, New York (2011).
  \bibitem[AD]{AD} A. Asok and B. Doran, \textit{$\mathbb{A}^1$-homotopy groups, excision, and solvable quotients}, Adv. Math. \textbf{221}(4) (2009) 1144-1190.
  \bibitem[AM]{AM} A. Asok and F. Morel, \textit{Smooth varieties up to $\mathbb{A}^1$-homotopy and algebraic h-cobordisms}, Adv. Math. \textbf{227}(5) (2011) 1990-2058.
  \bibitem[CD]{CD09} D.-C. Cisinski and F. D\'{e}glise, \textit{Local and stable homological algebra in Grothendieck abelian categories}, Homology, Homotopy Appl. \textbf{11}(1) (2009) 219-260.
  \bibitem[GJ]{GJ} P. Goerss and R. Jardine, \textit{Simplicial homotopy theory}, Progress in Mathematics, \textbf{174}, Birkh\"auser Verlag, Basel (1999).
  \bibitem[Mo1]{Mo1} F. Morel, \textit{The stable $\mathbb{A}^1$-connectivity theorems}, K-theory, \textbf{35}(1-2) (2005) 1-68.
  \bibitem[Mo2]{Mo2} F. Morel, \textit{$\mathbb{A}^1$-algebraic topology over a field}, Lecture Notes in Math., \textbf{2052}, Springer, Heidelberg (2012).
  \bibitem[MV]{MV} F. Morel and V. Voevodsky, \textit{$\mathbb{A}^1$-homotopy theory of schemes}, Publ. Math. IHES, \textbf{90} (1999) 45-143.
  \bibitem[MVW]{MVW} V. Mazza, V. Voevodsky, and C. Weibel, \textit{Lecture notes on motivic cohomology}, Volume 2 of Clay Mathematics Monographs, America Mathematical Society, Providence, RI (2006).
  \bibitem[Ni]{Nis} Y. Nisnevich, \textit{The completely decomposed topology on schemes and associated descent spectral sequence in algebraic K-theory}, Algebraic K-theory: connections with geometry and topology (Lake Louise, AB, 1987), NATO Adv. Sci. Inst. Ser. C Math. Phys. Sci., \textbf{279}, Kluwer Acad. Publ., Dordrecht (1989) 241-342.
  \bibitem[SV]{SV} A. Suslin and V. Voevodsky, \textit{Singular homology of abstract varieties}, Invent. Math., \textbf{123}(1) (1996) 61-94.
  \bibitem[Vo]{Vo00} V. Voevodsky, \textit{Triangulated categories of motives over a field}, Cycles, transfers, and motivic homology theories, Ann. of Math. Stud., \textbf{143}, Princeton Univ. Press, Princeton, NJ (2000) 188-238.
  
\end{thebibliography}
\end{document}